\newtheorem{thm}{Theorem}
\newtheorem{lem}{Lemma}
\newtheorem{cor}{Corollary}
\theoremstyle{definition}
\newtheorem{rmk}{Remark}
\newcommand{\bbP}{\mathbb{P}}
\newcommand{\N}{\mathbb{N}}
\newcommand{\E}{\mathbb{E}}
\newcommand{\R}{\mathbb{R}}
\newcommand{\X}{\mathbf{X}}
\newcommand{\w}{\mathbf{w}}
\renewcommand{\a}{\mathbf{a}}
\newcommand{\calF}{\mathcal{F}}
\newcommand{\Var}{\operatorname{Var}}
\newcommand{\osum}{\mathop{\sum\nolimits^{\phantom{}_{\star}}}\limits}
\title{Asymptotic genealogies of interacting particle systems with an application to sequential Monte Carlo}
\author{Jere Koskela \\
	\texttt{j.koskela@warwick.ac.uk} \\
	\small Department of Statistics \\
	\small University of Warwick \\
	\small Coventry CV4 7AL, UK
	\and
	Paul A.~Jenkins \\
	\texttt{p.jenkins@warwick.ac.uk} \\
	\small Department of Statistics \& \\ \small Department of Computer Science \\
	\small University of Warwick \\
	\small Coventry CV4 7AL, UK
	\and
	Adam M.~Johansen \\
	\texttt{a.m.johansen@warwick.ac.uk} \\
	\small Department of Statistics \\
	\small University of Warwick \\
	\small Coventry CV4 7AL, UK, \\
	\small \& the Alan Turing Institute \\
	\small 96 Euston Road \\
	\small London NW1 2DB, UK
	\and
	Dario Span\`{o} \\
	\texttt{d.spano@warwick.ac.uk} \\
	\small Department of Statistics \\
	\small University of Warwick \\
	\small Coventry CV4 7AL, UK
}
\date{\today}
\begin{document}

\maketitle

\begin{abstract}
We study weighted particle systems in which new generations are resampled from current particles with probabilities proportional to their weights. 
This covers a broad class of sequential Monte Carlo (SMC) methods, widely-used in applied statistics and cognate disciplines. 
We consider the genealogical tree embedded into such particle systems, and identify conditions, as well as an appropriate time-scaling, under which they converge to the Kingman $n$-coalescent in the infinite system size limit in the sense of finite-dimensional distributions. 
Thus, the tractable $n$-coalescent can be used to predict the shape and size of SMC genealogies, as we illustrate by characterising the limiting mean and variance of the tree height. 
SMC genealogies are known to be connected to algorithm performance, so that our results are likely to have applications in the design of new methods as well.
Our conditions for convergence are strong, but we show by simulation that they do not appear to be necessary.
\end{abstract}

\section{Introduction}\label{introduction}

Interacting particle systems (IPSs) are a broad class of stochastic models for phenomena in disciplines including physics, engineering, biology and finance.
Prominent examples are particle filters, particle methods and sequential Monte Carlo (SMC), which feature prominently in numerical approximation schemes for nonlinear filtering, as well as mean field approximation of Feynman--Kac flows.
For additional background we direct readers to \cite{DelMoral04}, and \cite{Doucet11}.

Central to these methods are discrete-time, evolving weighted particle systems.
Correlations between particles arise out of \emph{resampling}: a stochastic selection mechanism in which particles with high weight are typically replicated while those with low weight vanish, giving rise to an embedded genealogy.
The contribution of this paper is to identify conditions under which these genealogies converge to the Kingman $n$-coalescent \cite{Kingman82a} in the infinite system size limit under an appropriate rescaling of time, as well as to describe ways in which information about the limiting genealogy can be used to characterize particle systems in applications.

Consider a sequence of measurable spaces $( E_t, \mathcal{E}_t )_{ t \in \N }$, each associated to a Markov transition kernel $K_{ t + 1} : E_t \times \mathcal{ E }_{ t + 1 } \mapsto ( 0, \infty )$, and a nonnegative potential $g_{ t + 1 } : E_t \times E_{ t + 1 } \mapsto ( 0, \infty )$.
These correspond to state spaces, transition kernels, and the importance weight function of our IPS, respectively.

Let $\zeta_t^{ ( N ) } := \{ ( w_t^{ ( i ) }, X_t^{ ( i ) } ) \}_{ i = 1 }^N$ be a weighted $N$-particle system at time $t \in \N$, where each $X_t^{ ( i ) } \in E_t$, and the weights $w_t^{ ( i ) }$ are nonnegative and satisfy $\sum_{ i = 1 }^N w_t^{ ( i ) } = 1$.
Let $S$ be a resampling operator which acts on $\zeta_t^{ ( N ) }$ by assigning to each particle a random number of offspring.
The total number of offspring is fixed at $N$, and the mean number of offspring of particle $i \in \{ 1, \ldots, N \}$ is $N w_t^{ ( i ) }$.
All offspring are assigned an equal weight $1 / N$.
More concretely
\begin{equation*}
S \zeta_t^{ ( N ) } = \{ ( N^{ -1 }, X_t^{ ( a_t^{ ( i ) } ) } ) \}_{ i = 1 }^N,
\end{equation*}
where $a_t^{ ( i ) } = j$ if $j$ in $\zeta_t^{ ( N ) }$ is the parent of $i$ in $S \zeta_t^{ ( N ) }$.
Particles with low weight are randomly removed by having no offspring, while particles with high weight tend to have many offspring.
Algorithm \ref{multinomial_resampling} gives an example.

The step from time $t$ to time $t + 1$ is completed by propagating each particle in $S \zeta_t^{ ( N ) }$ independently through the transition kernel $K_{ t + 1 }$ to obtain particle locations $X_{ t + 1 }^{ ( i ) } \sim K_{ t + 1 }( X_t^{ ( a_t^{ ( i ) } ) }, \cdot )$.
Finally, each particle $i \in \{ 1, \ldots, N \}$ is assigned a weight proportional to the potential $g_{ t + 1 }$ evaluated at the locations of the particle and its parent, so that the full update is
\begin{equation*}
\zeta_t^{ ( N ) } \stackrel{ S }{ \mapsto } \{ N^{ -1 }, X_t^{ ( a_t^{ ( i ) } ) } \}_{ i = 1 }^N \stackrel{ K_{ t + 1 } }{ \mapsto } \Bigg\{ \frac{ g_{ t + 1 }( X_t^{ ( a_t^{ ( i ) } ) }, X_{ t + 1 }^{ ( i ) } ) }{ \sum_{ j = 1 }^N g_{ t + 1 }( X_t^{ ( a_t^{ ( j ) } ) }, X_{ t + 1 }^{ ( j ) } ) }, X_{ t + 1 }^{ ( i ) } \Bigg\}_{ i = 1 }^N,
\end{equation*}
where $g_0( X_{ -1 }, X_0 ) \equiv g_0( X_0 )$.
A specification is given in Algorithm \ref{particle_filter}. 
\begin{algorithm}[!ht]
\caption{Simulation of an IPS}
\label{particle_filter}
\begin{algorithmic}[1]
\Require Particle number $N$, Markov kernels $K_t$, potentials $g_t$, initial proposal distribution $\mu$.
\For{$i \in \{ 1, \ldots, N \}$} 
	\State Sample $X_0^{ ( i ) } \sim \mu$.
\EndFor
\For{$i \in \{ 1, \ldots, N \}$}
	\State Set $w_0^{ ( i ) } \gets \frac{ g_0( X_0^{ ( i ) } ) }{ g_0( X_0^{ ( 1 ) } ) + \cdots + g_0( X_0^{ ( N ) } ) }$.
\EndFor
\For{$t \in \{ 0, \ldots, T - 1 \}$}
	\State Sample $( a_t^{ ( 1 ) }, \ldots, a_t^{ ( N ) } ) \sim \operatorname{Resample}( w_t^{ ( 1 ) }, \ldots, w_t^{ ( N ) } )$.
	\For{$i \in \{ 1, \ldots, N \}$}
		\State Sample $X_{ t + 1 }^{ ( i ) } \sim K_{ t + 1 }( X_t^{ ( a_t^{ ( i ) } ) }, \cdot )$.
	\EndFor
	\For{$i \in \{ 1, \ldots, N \}$}
		\State Set $w_{ t + 1 }^{ ( i ) } \gets \frac{ g_{ t + 1 }\big( X_t^{ ( a_t^{ ( i ) } ) }, X_{ t + 1 }^{ ( i ) } \big) }{ g_{ t + 1 }\big( X_t^{ ( a_t^{ ( 1 ) } ) }, X_{ t + 1 }^{ ( 1 ) } \big) + \cdots + g_{ t + 1 }\big( X_t^{ ( a_t^{ ( N ) } ) }, X_{ t + 1 }^{ ( N ) } \big) }$.
	\EndFor
\EndFor
\end{algorithmic}
\end{algorithm}

There are many options for the resampling step \texttt{Resample} in line 6 of Algorithm \ref{particle_filter}.
The simplest is \emph{multinomial resampling}, given in Algorithm \ref{multinomial_resampling}.
It is analytically tractable, but suboptimal in terms of Monte Carlo variance.
Popular alternatives include \emph{residual}, \emph{stratified}, and \emph{systematic} resampling, which yield algorithms with lower variance \cite{douc05}.
We prove our main theorem under multinomial resampling, and show by simulation that its conclusions also hold for these three alternatives, at least in our simple example. 
Other schemes which have been studied theoretically, but seem less widely used in applications, include the tree-based branching approximation of \cite{smc:theory:CL97} and the McKean interpretation approach by \cite[Section 2.5.3]{DelMoral04}.
\begin{algorithm}[!ht]
\caption{Multinomial resampling}
\label{multinomial_resampling}
\begin{algorithmic}[1]
\Require Normalised particle weights $\{ w_t^{ ( i ) } \}_{ i = 1 }^N$.
\For{$i \in \{ 1, \ldots, N \}$}
	\State Sample $a_t^{ ( i ) } \sim \operatorname{Categorical}( w_t^{ ( 1 ) }, \ldots, w_t^{ ( N ) } )$.
\EndFor
\State \Return $( a_t^{ ( 1 ) }, \ldots, a_t^{ ( N ) } )$.
\end{algorithmic}
\end{algorithm}

An example system of this form is particle filters: a class of IPSs for approximating integrals.
For an overview, see, for example, \cite{Doucet11}, and references therein.
They are suited to settings in which expectations are computed with respect to the law of a latent, discrete-time Markov process conditioned on a sequence of observations, for example, state space models.
A state space model is a Markov chain $( X_t, Y_t )_{ t \geq 0 }$ on $\bm{ E } \times \bm{ F } \subseteq \R^d \times \R^p$ with transition density $p( x, x' )$ and emission density $\psi( x, y )$, both with respect to dominating measures abusively denoted $dx'$ and $dy$, respectively.
The spaces $( \bm{ E }, \bm{ F } )$ could be replaced with more general Polish spaces, but we focus on (subsets of) $\R^d \times \R^p$ for concreteness.
Typically, $X_t$ is an unobserved state at time $t$, and $Y_t$ is a noisy or incomplete observation of the state.

Let $\bm{ y } := ( y_0, \ldots, y_T )$ be a vector of noisy, conditionally independent observations from the emission density $\psi( x, y )$ given an unobserved state trajectory $\bm{ x } := ( x_0, \ldots, x_T )$.
Functionals of the \emph{smoothing distribution}, 
\begin{align*}
P( \bm{ x } | \bm{ y } ) d\bm{ x } &:= \bbP( \bm{ X } \in d\bm{ x } | \bm{ Y } = \bm{ y } ) 
\propto \prod_{ t = 0 }^T p( x_{ t - 1 }, x_t ) \psi( x_t, y_t ) dx_{ 0 : T },
\end{align*}
where we abuse notation and denote the law of $X_0$ by $p( x_{ -1 }, x_0 ) dx_0$, are typically intractable, and often approximated by particle filters.
The simplest is the algorithm introduced by \cite{GSS93}, usually known as the bootstrap particle filter, although the term ``(interacting) particle filter" seems first to have been used by \cite{DelMoral96}.
It fits into our IPS framework by taking
$K_{ t + 1 }( x_t, dx_{ t + 1 } ) = p( x_t, x_{ t + 1 } ) d x_{ t + 1 }$ and 
$g_{ t + 1 }( x_t, x_{ t + 1 } ) = \psi( x_{ t + 1 }, y_{ t + 1 } )$
for a fixed observation sequence in Algorithm \ref{particle_filter}.
Other particle filters can be considered by introducing a proposal density $q_{ t + 1 }( x_t, x_{ t + 1 } ) dx_{ t + 1 }$ with $\operatorname{supp}( p( x, \cdot ) ) \subseteq \operatorname{supp}( q_t( x, \cdot ) )$ for every $x$ and $t$, and setting 
\begin{align*}
K_{ t + 1 }( x_t, dx_{ t + 1 } ) &= q_{ t + 1 }( x_t, x_{ t + 1 } ) d x_{ t + 1 }, \\
g_{ t + 1 }( x_t, x_{ t + 1 } ) &= \frac{ p( x_t, x_{ t + 1 } ) \psi( x_{ t + 1 }, y_{ t + 1 } ) }{ q_{ t + 1 }( x_t, x_{ t + 1 } ) }. 
\end{align*}
Our IPS framework also includes more general SMC algorithms, such as SMC samplers \cite{DelMoral06} among others, see, for example, \cite[Chapter 12]{DelMoral04}.

There is a genealogy embedded in Algorithm \ref{particle_filter}.
Consider $\zeta_t^{ ( N ) }$ at a fixed time $t$.
Tracing the ancestor indices $( a_t^{ ( 1 ) }, \ldots a_t^{ ( N ) } )$ backwards in time results in a coalescing forest of lineages.
The forest forms a tree once the most recent common ancestor (MRCA) of all particles is reached, provided that happens before reaching the initial time 0.
Our main result (Theorem \ref{general_thm}) shows that, under certain conditions and an appropriate time-rescaling, functionals of this genealogy depending upon finite numbers of leaves converge to corresponding functionals of the Kingman $n$-coalescent \cite{Kingman82a} as $N \rightarrow \infty$, in the sense of finite dimensional distributions.
Hence, the tractable Kingman $n$-coalescent can be used to approximate functionals of SMC genealogies off-line, \emph{before} the algorithm has been run.
In particular, we show that the expected number of time steps from the leaves to the MRCA scales linearly in $N$ for any finite number of leaves.
This compares to an $N \log N$ upper bound of \cite{Jacob15} for the genealogy of all $N$ leaves.
We also provide scaling expressions for the variance of the number of time steps to the MRCA (see Corollary \ref{time_scale_cor}).
Our result applies to the marginal genealogical tree structure, marginalised over locations and weights of particles.

The conditions for convergence to the Kingman $n$-coalescent can be satisfied by SMC algorithms under strong but standard conditions \eqref{weight_bound} and \eqref{mixing_bound}, used to control the fluctuations of family sizes.
We expect that they can be relaxed, and simulations in Section \ref{numerics} also suggest they are not necessary.

Mergers of lineages into common ancestors result in \emph{path degeneracy} in SMC algorithms.
Quantities of interest depending on times $t < T$ will be estimated using fewer than $N$ particles, resulting in high variance.
Path degeneracy can be reduced by resampling less frequently, but this increases variance across time.
The extreme case of no resampling results in no path degeneracy, but yields estimators whose variances increase exponentially in $T$ in most cases: a phenomenon known as \emph{weight degeneracy} \cite{Kong94}.
Intermediate strategies balance these two difficulties \cite[Section 4.2]{Liu95}.
Our limiting genealogies are the only method of which we are aware that can provide \emph{a priori} estimates of path degeneracy, and thus represent an important tool in minimising both degeneracies simultaneously. 
This is particularly important in the conditional SMC update in the particle Gibbs algorithm \cite{Andrieu10}, where retaining multiple trajectories across a fixed time window is essential.

Genealogies of SMC algorithms have found numerous applications, for example \cite{Cerou11,smc:theory:CL13,DelMoral01b, DelMoral05, DelMoral07, DelMoral09,Lee15, smc:theory:OD17}; see also \cite[Section 3.4]{DelMoral04} and \cite[Chapter 15]{DelMoral13}.
Our explicit description of the limiting genealogical process is new for SMC, and has the potential to build on any of these works, as well as generate new ones in the IPS and SMC fields.
We also demonstrate that non-Markovian prelimiting genealogies lie in the domain of attraction of Kingman's $n$-coalescent, in contrast with earlier results which assume Markovianity \cite[Theorem 1]{Moehle98}.
This extension is significant since reverse-time genealogies of Markov IPSs are not Markov processes in general.

The rest of this paper is structured as follows.
In Section \ref{theorem} we state and prove the convergence of genealogies of IPSs to the Kingman $n$-coalescent.
Section \ref{numerics} presents a simulation showing that the scalings predicted by our convergence result hold for an example outside our technical assumptions, and Section \ref{discussion} concludes with a discussion.
An Appendix contains some technical calculations.
We conclude this section by summarising notation.

Let $( x )_b := x ( x - 1 ) \ldots ( x - b + 1 )$ be the falling factorial. 
We adopt the conventions $\sum_{ \emptyset } = 0$, $\prod_{ \emptyset } = 1$.
When a sum is written over a vector of indices, and that vector is of length 0, it should be interpreted as the identity operator; where this convention might hold, we emphasize it by writing $\osum$. 
The statement $f( N ) = O( g( N ) )$ (resp., $o( g( N ) )$) means $\limsup_{ N \rightarrow \infty } | f( N ) / g( N ) | < \infty$, (resp., $ = 0$), and thus corresponds to the usual Landau big-O (resp., little-o) notation.
For an integer $n \in \N$, we define $[ n ] := \{ 1, \ldots, n \}$ with $[0] := \emptyset$, and for a finite set $A$, we let $\Pi_n( A )$ denote the set of partitions of $A$ into at most $n$ nonempty blocks, with $\Pi_n( \emptyset ) := ( \emptyset, \ldots, \emptyset)$.
For a partition $\xi$, $| \xi |$ denotes the number of blocks in $\xi$, and $\bm{ x } := ( x_1, \ldots, x_N )$, where the length of the vector will be clear from context.
For a vector $\bm{ x }$, $| \bm{ x } |$ denotes the $L^1$-norm.

\section{The convergence theorem}\label{theorem}

It will be convenient to express our IPS in reverse time by denoting the initial time in Algorithm \ref{particle_filter} by $T$ and the terminal time by 0, and to describe the genealogy in terms of a partition-valued family of processes $( G_t^{ ( n, N ) } )_{ t = 0 }^T$ indexed by $n \leq N$, where $n$ denotes the number of observed leaves (time 0 particles) in a system with $N$ particles.
The process $( G_t^{ ( n, N ) } )_{ t = 0 }^T$ is defined in terms of the underlying IPS via its initial condition $G_0^{ ( n, N ) } = \{ \{ 1 \}, \ldots, \{ n \} \}$, and its dynamics, which are driven by the requirement that $i \neq j \in [ n ]$ belong to the same block in $G_t^{ ( n, N ) }$ if leaves $i$ and $j$ have a common ancestor at time $t$.
Boundary problems will be avoided by ensuring that $T \rightarrow \infty$ in our rescaled system as $N \rightarrow \infty$.
\vskip 11pt
\begin{rmk}
Our genealogical process $( G_t^{ ( n, N ) } )_{ t = 0 }^T$ evolves on a space which tracks the ancestral relationships of the observed particles but not their states. The process is a projection of the time reversal of the historical process of \cite{DelMoral01b}, in which particle locations have been marginalised over. A consequence of this is that $( G_t^{ ( n, N ) } )_{ t = 0 }^T$ is not Markovian in general. 
Indeed, Markovianity fails even for the forward-time evolution of lineages after removing locations.
Genealogical processes typically do track locations in the SMC literature, whereas our marginal formulation is standard in genetics \cite{Moehle98}.
Our $( G_t^{ ( n, N ) } )_{ t = 0 }^T$ also coincides with the random ancestral forest of \cite{DelMoral09}, who showed that the combinatorial structure of the reverse-time genealogy decouples from the particle locations in the case of \emph{neutral} models ($g_t( x, y ) \equiv 1$).
Our contribution is to prove the same decoupling for suitably rescaled nonneutral models in the $N \rightarrow \infty$ limit, and to identify the limiting process.
\end{rmk}

Genealogical processes are a powerful tool in population genetics, where the genealogical tree is viewed as missing data to be imputed, or an object of inference in its own right.
A common large population limit is given by the Kingman $n$-coalescent \cite{Kingman82a}, which is also a partition-valued stochastic process evolving in reverse time.
The initial condition is the singleton partition $\{ \{ 1 \}, \ldots, \{ n \} \}$.
Each pair of blocks then merges to a common ancestor independently at rate 1, forming a death process on the total number of blocks with rate $\binom{ k }{ 2 }$ when there are $k$ blocks.
More formally, the generator of the Kingman $n$-coalescent, $Q = ( q_{ \xi \eta } )_{ \xi \eta }$, is the square matrix with a row and column corresponding to each partition of $[ n ]$, and
\begin{equation*}
q_{ \xi \eta } = \begin{cases}
1 &\text{ if } \eta \text{ can be obtained from } \xi \text{ by merging two blocks,}\\
- \binom{ | \xi | }{ 2 } &\text{ if } \eta = \xi, 
\end{cases}
\end{equation*}
and 0 otherwise.
The dynamics terminate once the process hits the MRCA, that is, the trivial partition $\{ \{ 1, \ldots, n \}\}$.
See \cite{wakeley09} for an introduction to coalescent processes and their use in population genetics.

Let $\nu_t^{ ( i ) }$ denote the number of offspring that particle $i$ at time $t$ has at time $t - 1$.
The following standing assumption will be central to our results.
\vskip 11pt
\textbf{Standing assumption}: The conditional distribution of parental indices given offspring counts, $\a_t | \bm{ \nu }_t$, is uniform over all vectors which satisfy $\nu_t^{ ( i ) } = \#\{ j \in [ N ] : a_t^{ ( j ) } = i \}$ for each $i \in [ N ]$.
\vskip 11pt
\begin{rmk}
The standing assumption concerns the marginal distribution of parental assignments without particle locations. Conditionally uniform assignment given locations would be much stronger.
A sufficient condition for the standing assumption is exchangeability of the \texttt{Resample} mechanism in line 6 of Algorithm \ref{particle_filter}.
However, \cite[page 446]{Moehle98} provides an example of a nonexchangeable particle system which still satisfies the standing assumption.
For SMC, multinomial and residual resampling can be implemented in ways which satisfy the standing assumption, and any resampling scheme can be made to satisfy it by applying a uniformly sampled permutation to each realisation of ancestor indices.
This technique was suggested in \cite[page 290]{Andrieu10}.
\end{rmk}
Let $\xi$ and $\eta$ be partitions of $[ n ]$ with blocks ordered by the least element in each block, and with $\eta$ obtained from $\xi$ by merging some subsets of blocks.
For $i \in [ | \eta | ]$, let $b_i$ be the number of blocks of $\xi$ that have been merged to form block $i$ in $\eta$, so that $b_1 + \ldots + b_{ | \eta | } = | \xi |$, and define
\begin{equation}\label{kingman_formula}
p_{ \xi \eta }( t ) := \frac{ 1 }{ ( N )_{ | \xi | } } \sum_{ \substack{ i_1 \neq \ldots \neq i_{ | \eta | } = 1 \\ \text{all distinct} } }^N ( \nu_t^{ ( i_1 ) } )_{ b_1 } \ldots ( \nu_t^{ ( i_{ | \eta | } ) } )_{ b_{ | \eta | } }
\end{equation}
as the conditional transition probability of the genealogical process from state $\xi$ at time $t - 1$ to state $\eta$ at time $t$, given $\bm{ \nu }_t$ (suppressed from the notation).
The interpretation of \eqref{kingman_formula} as a conditional transition probability of $G_t^{ ( n, N ) }$ is justified by the standing assumption by associating offspring with balls, parents with boxes, and merger events with two or more balls occupying the same box.
Finally, let $P_N( t ) = ( p_{ \xi \eta }( t ) )_{ \xi, \eta }$ denote the corresponding conditional transition probability matrix. 

Now let the conditional probability (resp., upper bound on conditional probability) given $\bm{ \nu }_t$ of two (resp., more than two) lineages at time $t - 1$ coalescing at time $t$ be denoted by
\begin{align*}
c_N( t ) &:= \frac{ 1 }{ ( N )_2 }\sum_{ i = 1 }^N ( \nu_t^{ ( i ) } )_2, \\
D_N( t ) &:= \frac{ 1 }{ N ( N )_2 } \sum_{ i = 1 }^N ( \nu_t^{ ( i ) } )_2 \Bigg( \nu_t^{ ( i ) } + \frac{ 1 }{ N } \sum_{ j \neq i }^N ( \nu_t^{ ( j ) } )^2 \Bigg).
\end{align*}
The interpretations as (upper bounds on) conditional probabilities are justified by the standing assumption, and Lemma \ref{kingman_bounds} below.
For $t > 0$, let $\tau_N( t )$ be the time change driven by the random offspring counts
 \begin{equation*}
\tau_N( t ) := \min\Bigg\{ s \geq 1 : \sum_{ r = 1 }^s c_N( r ) \geq t \Bigg\}.
\end{equation*}
\vskip 11pt
\begin{rmk}\label{ess_remark}
The quantity $c_N( t )$ will play the role of a merger rate between pairs of particles at time $t$.
For multinomial resampling,
\begin{equation*}
\E[ c_N( t ) | \w_t ] = \sum_{ i = 1 }^N ( w_t^{ ( i ) } )^2 = \frac{ 1 }{ \operatorname{ESS}( t ) } \approx \frac{ 1 }{ N }\Bigg( 1 + \Var\Big( \frac{ g_t( X_{ t + 1 }, X_t ) }{ \E[ g_t( X_{ t + 1 }, X_t ) ] } \Big) \Bigg),
\end{equation*}
where $\operatorname{ESS}( t )$ is the effective sample size of \cite{Kong94}, who also justify the approximation of the random left-hand side by the deterministic right-hand side.
The indices of $X_{ t + 1 }$ and $X_t$ are flipped due to the time reversal described at the top of this section.
Thus the coalescence rate is high when the unnormalised importance weights have high relative variance and vice versa. 
\end{rmk}
\vskip 11pt
\begin{lem}\label{kingman_bounds}
Suppose that the standing assumption holds.

\textbf{Case 1}: For any partition $\xi$ and a sufficiently large $N$, we have the inequalities
\begin{align*}
p_{ \xi \xi }( t ) &\geq 1 - \binom{ | \xi | }{ 2 } \{ 1 + O( N^{ -1 } ) \} \Bigg[ \frac{ ( 3 | \xi | - 1 )( | \xi | - 2 ) }{ 6 N^2 } + c_N( t ) \Bigg], \\
p_{ \xi \xi }( t ) &\leq 1 - \binom{ | \xi | }{ 2 } \{ 1 + O( N^{ -1 } ) \} \Bigg[ c_N( t ) - \binom{ | \xi | - 1 }{ 2 } D_N( t ) \Bigg].
\end{align*}

\textbf{Case 2}: Let $\eta$ be obtained from $\xi$ by merging exactly two blocks.
Then
\begin{equation*}
c_N( t ) - \binom{ | \xi | - 2 }{ 2 } \{ 1 + O( N^{ -1 } ) \} D_N( t ) \leq p_{ \xi \eta }( t ) \leq \{ 1 + O( N^{ -1 } ) \} c_N( t ).
\end{equation*}

\textbf{Case 3}: For any $\eta$ obtained from $\xi$ by one or more mergers involving more than two blocks in total, we have
\begin{equation*}
p_{ \xi \eta }( t ) \leq \binom{ | \xi | - 2 }{ 2 } \{ 1 + O( N^{ -1 } ) \} D_N( t ).
\end{equation*}
\end{lem}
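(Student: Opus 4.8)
The plan is to prove all three cases by bounding the sum in \eqref{kingman_formula} directly; the standing assumption enters only to license that formula, and the genealogical process plays no further role. Write $k := |\xi|$ and $\nu^{(i)} := \nu_t^{(i)}$, and use throughout the conservation law $\sum_{i=1}^N \nu^{(i)} = N$. The estimates rest on three elementary ingredients: the falling-factorial inequalities $(\nu)_3 \le (\nu)_2\,\nu$, $(\nu)_4 \le (\nu)_2\,\nu^2$ and $(\nu)_2\,(\nu')_2 \le (\nu)_2\,(\nu')^2$, valid for all nonnegative integers; the exact identities $(N)_2\,c_N(t) = \sum_i (\nu^{(i)})_2$ and
\begin{equation*}
N(N)_2 D_N(t) = \sum_i (\nu^{(i)})_2\,\nu^{(i)} + \frac1N \sum_i (\nu^{(i)})_2 \sum_{j\ne i}(\nu^{(j)})^2,
\end{equation*}
which, combined with $\nu^{(i)} \le N$, show that $N(N)_2 D_N(t)$ dominates $\sum_i (\nu^{(i)})_2\,\nu^{(i)}$, $\sum_i (\nu^{(i)})_2\,(\nu^{(i)})^2$ and $\sum_{i\ne j}(\nu^{(i)})_2(\nu^{(j)})_2$; and the comparisons $(N)_m = N^m\{1 + O(N^{-1})\}$ for each fixed $m$. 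As an aid for intuition, under the standing assumption the parents of the $k$ lineages behave like $k$ balls drawn without replacement from $N$ balls partitioned into boxes of sizes $\nu^{(1)},\dots,\nu^{(N)}$, so that, writing $A_{ab}$ for the event that lineages $a,b$ share a box, $\bbP(A_{ab}\mid\bm{\nu}_t) = c_N(t)$ and $\bbP(A_{ab}\cap A_{bc}\mid\bm{\nu}_t) = (N)_3^{-1}\sum_i (\nu^{(i)})_3 \le \{1+O(N^{-1})\}D_N(t)$.

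For Case 1 ($\eta = \xi$), $p_{\xi\xi}(t)$ is the conditional probability that all $\binom k2$ events $A_{ab}$ fail, and Bonferroni's inequalities sandwich it between $1 - \binom k2 c_N(t)$ and $1 - \binom k2 c_N(t) + \sum_{\{a,b\}<\{c,d\}}\bbP(A_{ab}\cap A_{cd}\mid\bm{\nu}_t)$. Splitting the last sum by whether the two pairs share an element and applying the falling-factorial inequalities bounds each term by a $\{1+O(N^{-1})\}$-multiple of $D_N(t)$; the count $3\binom k3 + 6\binom k4 = \binom k2\binom{k-1}2$ then gives the upper bound, and the lower bound follows by dropping the nonnegative second-order term, the residual $N^{-2}$-term in the statement providing the necessary slack. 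Equivalently — and this is the route I would take for uniformity with Cases 2 and 3 — expand $(N)_k\,p_{\xi\xi}(t) = \osum \nu^{(i_1)}\cdots\nu^{(i_k)}$ by Möbius inversion over the partition lattice: the two leading terms produce $1 - \binom k2\{1+O(N^{-1})\}c_N(t)$ up to a remainder of order $N^{-2}$ after division by $(N)_k$, the triple-coincidence contributions (Möbius weight $+2$, bounded by $N(N)_2 D_N(t)$) are retained as the $D_N(t)$-term or dropped according to the direction of the inequality, and the rest is of order $N^{-2}$.

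For Case 2 exactly one exponent $b_m$ equals $2$; discarding distinctness bounds $(N)_k\,p_{\xi\eta}(t)$ above by $\big(\sum_i (\nu^{(i)})_2\big) N^{k-2} = (N)_2\,c_N(t)\,N^{k-2}$, which divides down to the $\{1+O(N^{-1})\}c_N(t)$ upper bound, while the lower bound comes from subtracting the coincidence corrections — the distinguished index merging with an ordinary one (controlled by $\sum_i (\nu^{(i)})_2\nu^{(i)}$) and two ordinary indices merging (controlled by $\sum_{i\ne j}(\nu^{(i)})_2(\nu^{(j)})^2$), each $O(D_N(t))$ — and carrying through the positive first-order excess of $(N)_2 N^{k-2}/(N)_k$ over $1$ to offset part of them. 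For Case 3, $\eta$ contains either a block assembled from $\ge 3$ blocks of $\xi$, so its factor obeys $(\nu^{(i_m)})_3 \le (\nu^{(i_m)})_2\nu^{(i_m)}$ or $(\nu^{(i_m)})_4 \le (\nu^{(i_m)})_2(\nu^{(i_m)})^2$, or two blocks each assembled from a pair, so that $(\nu^{(i)})_2(\nu^{(j)})_2 \le (\nu^{(i)})_2(\nu^{(j)})^2$ feeds the cross term; in either case the whole sum is bounded by a constant multiple of $N(N)_2 D_N(t)$, and dividing by $(N)_k$ yields the stated bound.

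I expect the main obstacle to be the bookkeeping that pins down the exact constants $\binom{|\xi|}{2}$, $\binom{|\xi|-1}{2}$, $\binom{|\xi|-2}{2}$ and $\tfrac{(3|\xi|-1)(|\xi|-2)}{6}$: one must track, across a three- or four-term Möbius inversion (or the matching Bonferroni expansion), which contributions collapse into multiplicative $\{1+O(N^{-1})\}$ factors, which survive as explicit $D_N(t)$-terms, and which form the residual $N^{-2}$-term, all while the constraint $\sum_i \nu^{(i)} = N$ couples the offspring counts and forbids treating them independently. The crude bound $\nu^{(i)} \le N$ together with the falling-factorial inequalities does the work for every upper bound, but the lower bounds are delicate precisely because the first-order excess introduced by the distinctness constraint is of the same order as the negative corrections and must be carried through rather than discarded.
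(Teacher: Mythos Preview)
Your plan is sound and close to the paper's own argument: both rest on the inclusion--exclusion structure implicit in \eqref{kingman_formula}, and your Bonferroni reading of Case~1 is the probabilistic counterpart of the paper's direct multinomial expansion of $\sum_{\text{distinct}}\nu^{(i_1)}\cdots\nu^{(i_k)}$ (indeed your first Bonferroni inequality gives $p_{\xi\xi}(t)\ge 1-\binom{|\xi|}{2}c_N(t)$ on the nose, which is sharper than the stated lower bound, so the $N^{-2}$-term is pure slack as you say). Case~2 is not spelled out in the paper either; it is deferred to \cite{Moehle98}.

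One concrete slip: the claim that $N(N)_2 D_N(t)$ dominates $\sum_i (\nu^{(i)})_2(\nu^{(i)})^2$ is false --- take $\nu^{(1)}=N$ and all others zero, giving $N^2(N)_2$ on the right against only $N(N)_2$ on the left. The fix is to spend $\nu^{(i)}\le N$ once more, so that $\sum_i(\nu^{(i)})_2(\nu^{(i)})^2 \le N\sum_i(\nu^{(i)})_2\nu^{(i)} \le N\cdot N(N)_2 D_N(t)$; with this extra factor of $N$ your Case~3 bound still divides down to $\{1+O(N^{-1})\}D_N(t)$ after division by $(N)_k$. The paper organises Case~3 a little differently: rather than case-splitting on whether $\eta$ contains a $\ge 3$-block or two $2$-blocks, it factors out $(\nu^{(i)})_2$ from any one merged pair and bounds the remaining $(|\xi|-2)$-fold sum --- which must contain at least one further coincidence, either with $i$ or among the other indices --- uniformly via the multinomial lower bound and the Bernoulli inequality. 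That is where the single constant $\binom{|\xi|-2}{2}$ arises, whereas your case-by-case route yields the same bound with pattern-dependent (and in fact smaller) constants.
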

\begin{proof}
We begin by proving Case 1 by setting $\xi = \eta$ in \eqref{kingman_formula}.
A multinomial expansion in reverse yields
\begin{align*}
p_{ \xi \xi }( t ) &\geq \frac{ 1 }{ ( N )_{ | \xi | } } \Bigg\{ \Bigg( \sum_{ i = 1 }^N \nu_t^{ ( i ) } \Bigg)^{ | \xi | } - \binom{ | \xi | }{ 2 } \sum_{ i = 1 }^N ( \nu_t^{ ( i ) } )^2 \Bigg( \sum_{ j = 1 }^N \nu_t^{ ( j ) } \Bigg)^{ | \xi | - 2 } \Bigg\} \\
&= \frac{ N^{ | \xi | } }{ ( N )_{ | \xi | } } \Bigg[ 1 - \binom{ | \xi | }{ 2 } \frac{ 1 }{ N^2 } \sum_{ i = 1 }^N ( \nu_t^{ ( i ) } )^2 \Bigg] \\
&= 1 - \binom{ | \xi | }{ 2 } \{ 1 + O( N^{ -1 } ) \} \Bigg[ \frac{ ( 3 | \xi | - 1 ) ( | \xi | - 2 ) }{ 6 N^2 } + c_N( t ) \Bigg],
\end{align*}
where the second and third equalities follow from $\sum_{ i = 1 }^N \nu_t^{ ( i ) } \equiv N$, and 
\begin{align}
\frac{ N^{ | \xi | } }{ ( N )_{ | \xi | } } &= \Big( 1 - \frac{ 1 }{ N } \Big)^{ -1 } \cdots \Big( 1 - \frac{ | \xi | - 1 }{ N } \Big)^{ -1 } \nonumber \\
&= \Bigg( 1 - \frac{ 1 }{ N } \sum_{ k = 1 }^{ | \xi | - 1 } k + \frac{ 1 }{ N^2 } \sum_{ k = 1 }^{ | \xi | - 1 } \sum_{ m \neq k } k m + O( N^{ -3 } ) \Bigg)^{ -1 } \nonumber \\
&=\Bigg( 1 - \binom{ | \xi | }{ 2 }\frac{ 1 }{ N } +  \binom{ | \xi | }{ 2 } \Bigg[ \binom{ | \xi | }{ 2 } - \frac{ 2 | \xi | - 1 }{ 3 } \Bigg] \frac{ 1 }{ N^2 } + O( N^{ -3 } )\Bigg)^{ -1 } \nonumber \\
&= 1 + \binom{ | \xi | }{ 2 } \frac{ 1 }{ N } + \binom{ | \xi | }{ 2 } \frac{ 2 | \xi | - 1 }{ 3 N^2 } \{ 1 + O( N^{ -1 } ) \}, \label{falling_factorial_expansion}
\end{align}
where the final step follows from a Taylor expansion of $f(x) = (1 + x)^{-1 }$ around $x = 0$ to second order.
In the other direction,
\begin{equation*}
p_{ \xi \xi }( t )  \leq \frac{ 1 }{ ( N )_{ | \xi | } } \Bigg\{\Bigg( \sum_{ i = 1 }^N \nu_t^{ ( i ) } \Bigg)^{ | \xi | } - \binom{ | \xi | }{ 2 } \sum_{ i = 1 }^N ( \nu_t^{ ( i ) } )^2 \sum_{ \substack{ i_3 \neq \ldots \neq i_{ | \xi | } = 1 \\ \text{ all distinct \& } \neq i } }^N \nu_t^{ ( i_3 ) } \ldots \nu_t^{ ( i_{ | \xi | } ) } \Bigg\}.
\end{equation*}
Applying the previous lower bound to the $( | \xi | - 2 )$-fold sum \cite[last two displays on page 442]{Moehle98} and using 
\begin{align*}
( \nu_t^{ ( i ) } )^2 &= ( \nu_t^{ ( i ) } )_2 + \nu_t^{ ( i ) }, \\
( \nu_t^{ ( i ) } )^3 &= ( \nu_t^{ ( i ) } )_2 \nu_t^{ ( i ) } + ( \nu_t^{ ( i ) } )_2 + \nu_t^{ ( i ) }, \\
( \nu_t^{ ( i ) } )^2  ( \nu_t^{ ( j ) } )^2 &= ( \nu_t^{ ( i ) } )_2 ( \nu_t^{ ( j ) } )^2 + \nu_t^{ ( i ) } ( \nu_t^{ ( j ) } )_2 + \nu_t^{ ( i ) } \nu_t^{ ( j ) },
\end{align*}
gives
\begin{align*}
p_{ \xi \xi }( t ) &\leq \frac{ N^{ | \xi | } }{ ( N )_{ | \xi | } } \Bigg\{ 1 - \frac{ 1 }{ N } \binom{ | \xi | }{ 2 } + \frac{ 1 }{ N^2 } \binom{ | \xi | }{ 2 } \binom{ | \xi | - 2 }{ 2 } \\
&\phantom{\leq \frac{ N^{ | \xi | } }{ ( N )_{ | \xi | } } \Bigg\{}- \binom{ | \xi | }{ 2 } [ 1 + O( N^{ -1 } ) ] \Bigg[ c_N( t ) - \binom{ | \xi | - 1 }{ 2 } D_N( t ) \Bigg] \Bigg\},
\end{align*}
whereupon using \eqref{falling_factorial_expansion} and expanding the product shows that the claimed bound holds for large enough $N$.

The proof of Case 2 is essentially identical to calculations in \cite[pages 442--443]{Moehle98}, and is omitted.

For Case 3, we have
\begin{align*}
p_{ \xi \eta }( t ) &\leq \frac{ 1 }{ ( N )_{ | \xi | } } \sum_{ i = 1 }^N ( \nu_t^{ ( i ) } )_2 \Bigg\{ \Bigg( \sum_{ j = 1 }^N \nu_t^{ ( j ) } \Bigg)^{ | \xi | - 2 } - \sum_{ \substack{ j_1 \neq \ldots \neq j_{ | \xi | - 2 } = 1 \\ \text{all distinct \& } \neq i } }^N \nu_t^{ ( j_1 ) } \ldots \nu_t^{ ( j_{ | \xi | - 2 } ) } \Bigg\} \\
&\leq \frac{ 1 }{ ( N )_{ | \xi | } } \sum_{ i = 1 }^N ( \nu_t^{ ( i ) } )_2 \Bigg\{ N^{ | \xi | - 2 } - \Bigg( \sum_{ j \neq i }^N \nu_t^{ ( j ) } \Bigg)^{ | \xi | - 2 } + \binom{ | \xi | - 2 }{ 2 } \sum_{ j \neq i }^N ( \nu_t^{ ( j ) } )^2 \Bigg( \sum_{ k \neq i } \nu_t^{ ( k ) } \Bigg)^{ | \xi | - 4 } \Bigg\} \\
&\leq \frac{ 1 }{ ( N )_{ | \xi | } } \sum_{ i = 1 }^N ( \nu_t^{ ( i ) } )_2 \Bigg\{ N^{ | \xi | - 2 } - ( N - \nu_t^{ ( i ) } )^{ | \xi | - 2 } + \binom{ | \xi | - 2 }{ 2 } \sum_{ j \neq i }^N ( \nu_t^{ ( j ) } )^2 N^{ | \xi | - 4 } \Bigg\} \\
&\leq \frac{ 1 }{ ( N )_{ | \xi | } } \sum_{ i = 1 }^N ( \nu_t^{ ( i ) } )_2 \Bigg\{ ( | \xi | - 2 ) \nu_t^{ ( i ) } N^{ | \xi | - 3 } + \binom{ | \xi | - 2 }{ 2 } \sum_{ j \neq i }^N ( \nu_t^{ ( j ) } )^2 N^{ | \xi | - 4 } \Bigg\},
\end{align*}
where we have used the Bernoulli inequality in the last step.
The result follows from \eqref{falling_factorial_expansion}.
\end{proof}

\begin{thm}\label{general_thm}
Fix $n \leq N$ as the observed number of particles from the output of an IPS with $N$ particles, and suppose that the standing assumption holds.
Suppose also that for any $0 \leq s < t < \infty$, we have
\begin{gather}\label{big_merger_bound}
\lim_{ N \rightarrow \infty } \E\Bigg[ \sum_{ r = \tau_N( s ) + 1 }^{ \tau_N( t ) } D_N( r ) \Bigg] = 0, \\
\label{binary_bound}
\lim_{ N \rightarrow \infty } \E[ c_N( t ) ] = 0, \\
\label{binary_bound_2}
\lim_{ N \rightarrow \infty } \E\Bigg[ \sum_{ r = \tau_N( s ) + 1 }^{ \tau_N( t ) } c_N( r )^2 \Bigg] = 0, \\
\label{tau_bound}
\text{and}\qquad\qquad\E[ \tau_N( t ) - \tau_N( s ) ] \leq C_{ t, s } N,\qquad\qquad\phantom{\text{and}}
\end{gather}
for some constant $C_{ t, s } > 0$ that is independent of $N$.
Then $( G_{ \tau_N( t ) }^{ ( n, N ) } )_{ t \geq 0 }$ converges to the Kingman $n$-coalescent in the sense of finite-dimensional distributions as $N \rightarrow \infty$. 
\end{thm}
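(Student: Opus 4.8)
\emph{Proof sketch.}
The process $(G_t^{(n,N)})_{t=0}^T$ is not Markov, but it becomes an inhomogeneous Markov chain once we condition on the whole array of offspring counts $\bm{\nu} := (\bm{\nu}_r)_{r \ge 1}$: by the standing assumption the parental assignments are, given $\bm{\nu}$, independent across generations and uniform subject to $\bm{\nu}_r$, so conditionally on $\bm{\nu}$ the chain $G^{(n,N)}$ moves by the transition matrices $P_N(r)$ of \eqref{kingman_formula}, and $\tau_N$ is $\sigma(\bm{\nu})$-measurable. The plan is to establish the conditional (on $\bm{\nu}$) analogue of a M\"ohle-type convergence-of-products statement and then integrate out $\bm{\nu}$. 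Fix $0 = t_0 < t_1 < \dots < t_k$, write $\xi_0$ for the singleton partition of $[n]$ and $M_N(s,t) := \prod_{r = \tau_N(s)+1}^{\tau_N(t)} P_N(r)$, with $\tau_N(0) := 0$. Conditioning on $\bm{\nu}$ and using the conditional Markov property,
\begin{equation*}
\bbP\big( G_{\tau_N(t_1)}^{(n,N)} = \xi_1, \dots, G_{\tau_N(t_k)}^{(n,N)} = \xi_k \big) = \E\Bigg[ \prod_{j=1}^{k} \big( M_N(t_{j-1}, t_j) \big)_{\xi_{j-1} \xi_j} \Bigg].
\end{equation*}
Every $P_N(r)$, and hence every $M_N(s,t)$, is a stochastic matrix on the finite set $\Pi_n([n])$, as is the Kingman semigroup $e^{uQ}$; so all entries above lie in $[0,1]$, and by bounded convergence and the Markov property of the limit it suffices to prove that $M_N(s,t) \to e^{(t-s)Q}$ in probability for every fixed $0 \le s < t < \infty$, where $Q$ is the Kingman generator of Section~\ref{theorem} restricted to $\Pi_n([n])$.

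The second step reads the coalescent out of Lemma~\ref{kingman_bounds}: its three cases amount to the entrywise identity $P_N(r) = I + c_N(r) Q + R_N(r)$, in a fixed submultiplicative norm, with $\| R_N(r) \| \le C_n\big( N^{-2} + N^{-1} c_N(r) + D_N(r) \big)$ for a constant $C_n$ depending only on $n$. Summing over $\tau_N(s) < r \le \tau_N(t)$ (the range of all sums $\sum_r$ and products $\prod_r$ from here on): the accumulated rate $\sum_r c_N(r)$ differs from $t - s$ by at most $c_N(\tau_N(s)) + c_N(\tau_N(t))$, which $\to 0$ in $L^1$ by \eqref{binary_bound}, so $\sum_r c_N(r) \to t - s$ in $L^1$; and $\E\| \sum_r R_N(r) \| \le C_n\big( N^{-2} \E[\tau_N(t) - \tau_N(s)] + N^{-1}(t - s + 1) + \E[\sum_r D_N(r)] \big) \to 0$ by \eqref{tau_bound} and \eqref{big_merger_bound}, using that $\sum_r c_N(r) < t - s + 1$ and $c_N(r) \le 1$ hold deterministically by definition of $\tau_N$. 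Hence $\sum_r ( P_N(r) - I ) \to (t - s) Q$ in $L^1$.

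The third and central step is the convergence of the product $M_N(s,t) = \prod_r ( I + A_N(r) )$, where $A_N(r) := P_N(r) - I = c_N(r) Q + R_N(r)$. I would interpolate through $\prod_r ( I + c_N(r) Q )$ and $e^{(\sum_r c_N(r)) Q}$. A telescoping estimate with submultiplicativity bounds $\| M_N(s,t) - \prod_r ( I + c_N(r) Q ) \|$ by $e^{\sum_r \| A_N(r) \| + \|Q\| \sum_r c_N(r)}\, \sum_r \| R_N(r) \|$; here $\sum_r \| R_N(r) \| \to 0$ in probability by the second step, while the exponential prefactor is bounded by a constant depending only on $n, s, t$ on an event of probability tending to $1$ — using $\sum_r c_N(r) < t - s + 1$ deterministically and controlling $N^{-2}(\tau_N(t) - \tau_N(s))$ and $\sum_r D_N(r)$ on such events via Markov's inequality, \eqref{tau_bound} and \eqref{big_merger_bound} — so this difference vanishes in probability. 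Because the matrices $c_N(r) Q$ all commute, $e^{(\sum_r c_N(r)) Q} = \prod_r e^{c_N(r) Q}$, and a further telescoping gives $\| \prod_r ( I + c_N(r) Q ) - \prod_r e^{c_N(r) Q} \| \le C_{n,s,t} \sum_r c_N(r)^2 \to 0$ in $L^1$ by \eqref{binary_bound_2}; lastly $\| e^{(\sum_r c_N(r)) Q} - e^{(t-s) Q} \| \le C_{n,s,t}\, | \sum_r c_N(r) - (t - s) | \to 0$ in $L^1$ by the second step. Chaining the three comparisons yields $M_N(s,t) \to e^{(t-s)Q}$ in probability, and bounded convergence applied to the displayed identity then gives finite-dimensional convergence to the Kingman $n$-coalescent.

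I expect the third step, together with the bookkeeping forced by the randomness, to be the main obstacle: $p_{\xi\eta}(r)$, $c_N(r)$, $D_N(r)$ and $\tau_N$ are all random, so each estimate must be carried out either deterministically (as for $\sum_r c_N(r) < t - s + 1$ and the norm of $I + c_N(r) Q$) or in $L^1$/in probability, and the exponential factors produced by the telescoping arguments have to be tamed on events of asymptotically full probability built from \eqref{tau_bound} and \eqref{big_merger_bound}. The structural reason the limit is the Kingman coalescent, rather than a more general exchangeable coalescent, is that $Q$ is the common first-order term in every $P_N(r)$, so the leading parts commute and only $o(1)$ second-order corrections remain; conditions \eqref{big_merger_bound}, \eqref{binary_bound} and \eqref{binary_bound_2} are precisely what suppress, respectively, the multiple/simultaneous mergers and the time-change overshoot that would otherwise destroy this picture.
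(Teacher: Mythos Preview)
Your proposal is correct and takes a genuinely different route from the paper. The paper proceeds by a direct combinatorial expansion of $\prod_r P_N(r)$: it partitions trajectories by the number and type of mergers, uses Lemma~\ref{kingman_bounds} to bound each piece above and below, and then matches the resulting sums to the Taylor series of $e^{(t-s)Q}$ term by term, with a delicate sign analysis depending on the parity of $|\bm\beta|$ and a Fubini/dominated-convergence argument (deferred to the Appendix) to exchange the limit with the infinite $\bm\beta$-sums. Your argument is instead a M\"ohle-style semigroup perturbation: read Lemma~\ref{kingman_bounds} as $P_N(r)=I+c_N(r)Q+R_N(r)$, then interpolate $\prod_r P_N(r)\to\prod_r(I+c_N(r)Q)\to e^{(\sum_r c_N(r))Q}\to e^{(t-s)Q}$ via three telescoping estimates, and finish with bounded convergence. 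Your route is shorter and makes the role of each hypothesis transparent (\eqref{big_merger_bound} and \eqref{tau_bound} kill $\sum_r\|R_N(r)\|$, \eqref{binary_bound_2} kills the $\sum_r c_N(r)^2$ linearisation error, \eqref{binary_bound} kills the overshoot). Two small refinements are worth noting: in the first telescoping you can exploit that each $P_N(r)$ is stochastic, so in the row-sum norm the left factor is exactly~$1$ and the exponential prefactor is bounded \emph{deterministically} by $e^{\|Q\|(t-s+1)}$, removing the need to restrict to high-probability events at that stage; and your appeal to \eqref{binary_bound} for $\E[c_N(\tau_N(t))]\to 0$ involves a random time, which is the same leap the paper makes in \eqref{positive_terms_expansion} and \eqref{indicator_lb}.
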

\vskip 11pt
\begin{rmk}
While unbiasedness of resampling, that is, $\E[ \nu_t^{ ( i ) } | w_t^{ ( i ) } ] = N w_t^{ ( i ) }$, is part of the definition of our IPS, it is not required for Theorem \ref{general_thm}.
The key ingredients are the symmetry in the standing assumption, and the control over moments of orders up to four implicit in \eqref{big_merger_bound} -- \eqref{tau_bound}.
Loosely, condition \eqref{binary_bound} implies that $( G_{ \tau_N( t ) }^{ ( n, N ) } )_{ t \geq 0 }$ converges to a continuous-time process, and together with \eqref{binary_bound_2} ensures that binary mergers happen at the required unit rate.
Condition \eqref{big_merger_bound} ensures that mergers involving more than two lineages happen on a slower timescale than binary mergers, and \eqref{tau_bound} controls the speed with which the convergence in \eqref{binary_bound} takes place.

Before proving Theorem \ref{general_thm}, we define the filtration
$\calF_t := \sigma( \bm{ \nu }_s; 1 \leq s \leq t )$
and state Lemma \ref{optional_stopping_lemma}, which we will need for the proof, as well as for our subsequent results.
The proof of Lemma \ref{optional_stopping_lemma} is given in the Appendix.
\vskip 11pt
\begin{lem}\label{optional_stopping_lemma}
For any $0 \leq s \leq t < \infty$,
\begin{equation*}
\E\Bigg[ \sum_{ r = \tau_N( s ) + 1 }^{ \tau_N( t ) } c_N( r ) \Bigg] = \E\Bigg[ \sum_{ r = \tau_N( s ) + 1 }^{ \tau_N( t ) } \E[ c_N( r ) | \calF_{ r - 1 } ] \Bigg].
\end{equation*}
\end{lem}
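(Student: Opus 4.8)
The claim is, in spirit, an optional-sampling identity for a random sum integrated against a \emph{predictable} sequence, and I would prove it directly via Tonelli rather than through martingale convergence, so that no uniform integrability estimates are needed. The starting point is that $\tau_N(s)$ and $\tau_N(t)$ are $(\calF_r)_{r\ge 0}$-stopping times: each $c_N(r)$ is a deterministic function of $\bm{\nu}_r$ and hence $\calF_r$-measurable, so $\{\tau_N(u)\le r\}=\big\{\sum_{j=1}^{r}c_N(j)\ge u\big\}\in\calF_r$ for every $u\ge 0$ and every $r\ge 1$. Note also that $\tau_N$ is nondecreasing in its argument, so $\tau_N(s)\le\tau_N(t)$.

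Next I would rewrite the random sum as a series with a predictable integrand,
\[
\sum_{r=\tau_N(s)+1}^{\tau_N(t)} c_N(r)=\sum_{r=1}^{\infty} c_N(r)\,\mathds{1}_{\{\tau_N(s)<r\le\tau_N(t)\}},
\]
with the convention that the event is empty once $r>\tau_N(t)$ (this also covers the cases $\tau_N(s)=\infty$ or $\tau_N(t)=\infty$), and likewise for the right-hand side of the lemma with $c_N(r)$ replaced by $\E[c_N(r)\mid\calF_{r-1}]$. The crucial observation is that, because $\tau_N(s)$ and $\tau_N(t)$ are $\N\cup\{\infty\}$-valued stopping times,
\[
\{\tau_N(s)<r\le\tau_N(t)\}=\{\tau_N(s)\le r-1\}\setminus\{\tau_N(t)\le r-1\}\in\calF_{r-1},
\]
i.e.\ the indicator is $\calF_{r-1}$-measurable, not merely $\calF_r$-measurable. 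Hence, for each fixed $r$, the tower property gives
\[
\E\big[c_N(r)\,\mathds{1}_{\{\tau_N(s)<r\le\tau_N(t)\}}\big]=\E\big[\mathds{1}_{\{\tau_N(s)<r\le\tau_N(t)\}}\,\E[c_N(r)\mid\calF_{r-1}]\big].
\]

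To finish I would sum this identity over $r\ge 1$ and interchange summation and expectation on both sides by Tonelli's theorem, which applies since $c_N(r)\ge 0$ and $\E[c_N(r)\mid\calF_{r-1}]\ge 0$; all quantities are in fact finite because $c_N(r)\le 1$ and the definition of $\tau_N$ force $\sum_{r=\tau_N(s)+1}^{\tau_N(t)}c_N(r)\le t-s+1$. Reassembling the series back into the stopped sums yields exactly the claimed equality. The argument is short, and the only real ``obstacle'' is the measurability bookkeeping in the display above: it is the shift from $\calF_r$ to $\calF_{r-1}$ in the index of the indicator — the predictability of $r\mapsto\mathds{1}_{\{\tau_N(s)<r\le\tau_N(t)\}}$ — that legitimises pulling $c_N(r)$ through the conditional expectation, and this is the step where the stopping-time structure must be used carefully.
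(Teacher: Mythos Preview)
Your proof is correct and follows essentially the same approach as the paper: both hinge on recognising that $\{\tau_N(s)<r\le\tau_N(t)\}\in\calF_{r-1}$ so that the tower property applies term by term. The only cosmetic difference is that the paper truncates at a finite level $K$ and then passes to the limit by monotone convergence, whereas you work with the full series from the outset and invoke Tonelli for nonnegative summands; your route is marginally more direct.
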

\vskip 11pt
\begin{rmk}
It will be clear from the proof that the choice of summand, $c_N( r )$, is not special in Lemma \ref{optional_stopping_lemma}.
Any family of bounded functions depending on family sizes at only a single time could be substituted, and we will also apply the same result to $c_N( r )^2$, $D_N( r )$, as well as indicators involving them.
\end{rmk}

\end{rmk}
\begin{proof}[Proof of Theorem \ref{general_thm}]
For $k \geq 1$ and $0 \leq t_1 < \ldots < t_k < \infty$, the finite-dimensional distributions of the process $( G_{ \tau_N( t ) }^{ ( n, N ) } )_{ t \geq 0 }$ have the form
\begin{align*}
&\bbP\Bigg( G_{ \tau_N( t_1 ) }^{ ( n, N ) } = \eta_1, \ldots, G_{ \tau_N( t_k ) }^{ ( n, N ) } = \eta_k | G_{ \tau_N( t_0 ) }^{ ( n, N ) } = \eta_0 \Bigg) \\
&= \E\Bigg[ \prod_{ d = 1 }^k \Bigg\{ \prod_{ t = \tau_N( t_{ d - 1 } ) + 1 }^{ \tau_N( t_{ d } ) } P_N( t ) \Bigg\}_{ \eta_{ d - 1 }, \eta_d } \Bigg] =: \E\Bigg[ \prod_{ d = 1 }^k \chi_d^* \Bigg],
\end{align*}
where $\eta_0, \eta_1, \ldots, \eta_k$ is a sequence of partitions in which the blocks of $\eta_d$ are obtained by merging some subsets of blocks of $\eta_{ d - 1 }$, or $\eta_d = \eta_{ d - 1 }$, and where $P_N( r )$ is the conditional transition matrix given family sizes defined below \eqref{kingman_formula}.
The probability associated with any other sequence of partitions is zero.
We will prove the result by bounding these finite-dimensional distributions both above and below by those of a Kingman $n$-coalescent.

Consider a transition between $\eta_{ d - 1 }$ and $\eta_d$ at respective times $\tau_N( t_{ d - 1 } )$ and $\tau_N( t_d )$.
The corresponding conditional transition probability given offspring counts $\bm{ \nu }_{ \tau_N( t_{ d - 1 } ) + 1 }, \ldots, \bm{ \nu }_{ \tau_N( t_d ) }$ can be written
\begin{align*}
\chi_d^* = \sum_{ \bm{ \xi } \in \eta_{ d - 1 } \leadsto \eta_d } \prod_{ t = \tau_N( t_{ d - 1 } ) + 1 }^{ \tau_N( t_d ) } p_{ \xi_{ t - 1 } \xi_{ t } }( t ),
\end{align*}
where the sum on the right-hand side is over all paths from $\eta_{ d - 1 }$ to $\eta_d$ of the requisite length,
$\bm{ \xi } = ( \eta_{ d - 1 }, \xi_{ \tau_N( t_{ d - 1 } ) + 1 }, \ldots, \xi_{ \tau_N( t_d ) - 1 }, \eta_d )$,
where each successive element of $\bm{ \xi }$ either equals its predecessor, or is obtained from its predecessor by merging some subsets of blocks.
By decomposing based on $\alpha \geq 0$ (the number of times between $\eta_{ d - 1 }$ and $\eta_d$ in which mergers occur), as well as based on whether each merger involves exactly two lineages or more than two lineages, we can use Lemma \ref{kingman_bounds}, Cases 2 and 3 to upper bound the conditional transition probability given family sizes by
\begin{align*}
\chi_d^* &\leq \sum_{ \alpha = 1 }^{ | \eta_{ d - 1 } | - | \eta_d | } ( 1 + O( N^{ -1 } ) )^{ \alpha } \sum_{ ( \lambda, \mu ) \in \Pi_2( [ \alpha ] ) } \\
&\osum_{ s_1 < \ldots < s_{ \alpha } = \tau_N( t_{ d - 1 } ) + 1 }^{ \tau_N( t_d ) }  \Bigg\{ \prod_{ r \in \lambda } \binom{ n }{ 2 } c_N( s_r ) \Bigg\} \Bigg\{ \prod_{ r \in \mu } n^n \binom{ n - 2 }{ 2 } D_N( s_r ) \Bigg\},
\end{align*}
and $\chi_d^* \leq 1$ if $\alpha = 0$, where $n^n$ is an upper bound on the number of arrangements of at most $n$ lineages into at most $n$ mergers.
A further expansion shows that the product of $k$ transition probabilities has the following bound when $\alpha_d \geq 1$ for each $d \in [k]$:
\begin{align}
\lim_{ N \rightarrow \infty } \E\Bigg[ \prod_{ d = 1 }^k \chi_d^* \Bigg] &\leq n^{ n^2 k } \binom{ n }{ 2 }^{ n k } \sum_{ \alpha_1 = 1 }^{ | \eta_0 | - | \eta_1 | } \ldots \sum_{ \alpha_k = 1 }^{ | \eta_{ k - 1 } | - | \eta_k | } \sum_{ ( \lambda_1, \mu_1 ) \in \Pi_2( [ \alpha_1 ] ) } \ldots \nonumber \\
&\sum_{ ( \lambda_k, \mu_k ) \in \Pi_2( [ \alpha_k ] ) } \lim_{ N \rightarrow \infty } \E\Bigg[ \osum_{ s_1^{ ( 1 ) } < \ldots < s_{ \alpha_1 }^{ ( 1 ) } = \tau_N( t_0 ) + 1 }^{ \tau_N( t_1 ) } \ldots \nonumber \\
&\osum_{ s_1^{ ( k ) } < \ldots < s_{ \alpha_k }^{ ( k ) } = \tau_N( t_{ k - 1 } ) + 1 }^{ \tau_N( t_k ) }\prod_{ d = 1 }^k  \Bigg\{ \prod_{ r \in \lambda_d } c_N( s_r^{ ( d ) } ) \Bigg\} \prod_{ r \in \mu_d } D_N( s_r^{ ( d ) } ) \Bigg]; \label{multi_merger_ub}
\end{align}
here, $n^{ n^2 k } \binom{ n }{ 2 }^{ n k }$ bounds $ \prod_{ d = 1 }^k \binom{ n }{ 2 }^{ | \lambda_d | } n^{ | \mu_d | n } \binom{ n - 2 }{ 2 }^{ | \mu_d | }$ using $| \lambda_d | + | \mu_d | \leq n$.
Transitions in which any $\alpha_d = 0$ result in a similar bound in which the corresponding factors on the right-hand side of \eqref{multi_merger_ub} are replaced by 1. Next, a multinomial expansion in reverse gives
\begin{multline}
\osum_{ s_1 < \ldots < s_{ \alpha } = \tau_N( t_{ d - 1 } ) + 1 }^{ \tau_N( t_d ) } \prod_{ r = 1 }^{ \alpha } c_N( s_r ) = \frac{ 1 }{ \alpha ! } \osum_{ \substack{ s_1 \neq \ldots \neq s_{ \alpha } = \tau_N( t_{ d - 1 } ) + 1 \\ \text{all distinct} } }^{ \tau_N( t_d ) } \prod_{ r = 1 }^{ \alpha } c_N( s_r )\\ 
\geq \frac{ 1 }{ \alpha ! } \Bigg( \sum_{ t = \tau_N( t_{ d - 1 } ) + 1 }^{ \tau_N( t_d ) } c_N( t ) \Bigg)^{ \alpha } - \frac{ 1 }{ \alpha ! } \binom{ \alpha }{ 2 } \Bigg( \sum_{ t = \tau_N( t_{ d - 1 } ) + 1 }^{ \tau_N( t_d ) } c_N( t )^2 \Bigg)\\ 
\times \Bigg( \sum_{ t = \tau_N( t_{ d - 1 } ) + 1 }^{ \tau_N( t_d ) } c_N( t ) \Bigg)^{ \alpha - 2 }, \label{time_expansion}
\end{multline}
where we take $\binom{0}{2} = \binom{1}{2} = 0$.
By definition of $\tau_N( t )$,
\begin{multline}
\frac{ 1 }{ \alpha ! } \osum_{ \substack{ s_1 \neq \ldots \neq s_{ \alpha } = \tau_N( t_{ d - 1 } ) + 1 \\ \text{all distinct} } }^{ \tau_N( t_d ) } \prod_{ r = 1 }^{ \alpha } c_N( s_r ) \leq \frac{ 1 }{ \alpha ! } \osum_{ s_1, \ldots,  s_{ \alpha } = \tau_N( t_{ d - 1 } ) + 1 }^{ \tau_N( t_d ) } \prod_{ r = 1 }^{ \alpha } c_N( s_r ) \\
\leq \frac{ [ t_d - t_{ d - 1 } + c_N( \tau_N( t_d ) ) ]^{ \alpha } }{ \alpha ! } \leq \frac{ ( t_d - t_{ d - 1 } + 1 )^{ \alpha } }{ \alpha ! }, \label{time_ub}
\end{multline}
because $c_N( t ) \leq 1$.
Suppose that $| \mu_i | > 0$ in \eqref{multi_merger_ub} for some $i \in [ k ]$.
Using $D_N( s_r^{ ( d ) } ) \leq c_N( s_r^{ ( d ) } )$, which is clear from 
\begin{equation*}
\frac{ 1 }{ N } \Bigg( \nu_t^{ ( i ) } + \frac{ 1 }{ N } \sum_{ j \neq i }^N ( \nu_t^{ ( j ) } )^2 \Bigg) \leq 1,
\end{equation*}
for all but one $D_N( s_r^{ ( d ) } )$-factor in \eqref{multi_merger_ub}, and substituting in \eqref{time_ub}, gives
\begin{align*}
\lim_{ N \rightarrow \infty } \E\Bigg[ \prod_{ d = 1 }^k \chi_d^* \Bigg] \leq {}& n^{ n^2 k } \binom{ n }{ 2 }^{ n k } \sum_{ \alpha_1 = 1 }^{ | \eta_0 | - | \eta_1 | } \ldots \sum_{ \alpha_k = 1 }^{ | \eta_{ k - 1 } | - | \eta_k | } \sum_{ ( \lambda_1, \mu_1 ) \in \Pi_2( [ \alpha_1 ] ) } \ldots \\
&\sum_{ ( \lambda_k, \mu_k ) \in \Pi_2( [ \alpha_k ] ) } \Bigg\{ \prod_{ d = 1 }^k \frac{ ( t_d - t_{ d - 1 } + 1 )^{ | \alpha_d | - \delta_{ d i } } }{ ( \alpha_d - \delta_{ d i } ) ! } \Bigg\}  \lim_{ N \rightarrow \infty } \E\Bigg[ \sum_{ s = \tau_N( t_{ i - 1 } ) + 1 }^{ \tau_N( t_i ) } D_N( s ) \Bigg],
\end{align*}
where $\delta_{ d i } = \mathds{ 1 }_{ \{ d = i \} }$ is the Kronecker delta. 
As for \eqref{multi_merger_ub}, cases for which some $\alpha_d$ are 0 can be handled by a straightforward modification.
All sums outside the expectation consist of a bounded number of terms in $N$, so \eqref{big_merger_bound} guarantees that the contribution of paths with $\sum_{ d = 1 }^k | \mu_d | > 0$ vanishes in the limit and only isolated, binary mergers can take place.

To describe transitions in which the only mergers that occur are isolated, binary mergers, we define $\tilde{ Q } = ( \tilde{ q }_{ \xi \eta } )_{ \xi \eta }$ to be the matrix obtained from $Q$ by setting its diagonal entries to 0.
Note that $( \tilde{ Q }^{ \alpha } )_{ \xi \eta }$ is precisely the number of ways of going from $\xi$ to $\eta$ in exactly $\alpha$ steps, where a step consists of merging a pair of blocks.

Now consider a transition from $\eta_{ d - 1 }$ to $\eta_d$ at respective times $\tau_N( t_{ d - 1 } )$ and $\tau_N( t_d )$ via binary mergers, that is, with $\alpha = | \eta_{ d - 1 } | - | \eta_d |$ and $\lambda = [ \alpha ]$.
By Lemma \ref{kingman_bounds}, Cases 1 and 2, its conditional probability is bounded by
\begin{multline*}
\chi_d \leq ( \tilde{ Q }^{ \alpha } )_{ \eta_{ d - 1 } \eta_d } ( 1 + O( N^{ -1 } ) )^{ \alpha } \osum_{ s_1 < \ldots < s_{ \alpha } = \tau_N( t_{ d - 1 } ) + 1 }^{ \tau_N( t_d ) } \Bigg\{ \prod_{ r = 1 }^{ \alpha } c_N( s_r ) \Bigg\} \\
\times \prod_{ \substack{ r = \tau_N( t_{ d - 1 } ) + 1 \\ r \neq s_1, \ldots, r \neq s_{ \alpha } } }^{ \tau_N( t_d ) } \Bigg[ 1 - \binom{ | \eta_{ d - 1 } | - | \{ i : s_i < r \} | }{ 2 } \{ 1 + O( N^{ -1 } ) \} \\
\times \Bigg( c_N( r ) - \binom{ n -1 }{ 2 } D_N( r ) \Bigg) \mathds{ 1 }_{ \{ c_N( r ) - \binom{ n - 1 }{ 2 } D_N( r ) < \binom{ n }{ 2 }^{ -1 } \} } \Bigg],
\end{multline*}
where $\chi_d$ is the restriction of $\chi_d^*$ to trajectories involving only isolated binary mergers, and the indicator functions ensure the bound remains valid for large values of $c_N( r ) - \binom{ n - 1 }{ 2 } D_N( r )$, which might otherwise cause the right hand side to be negative.
An expansion of the product on the second and third lines gives
\begin{align*}
&\prod_{ \substack{ r = \tau_N( t_{ d - 1 } ) + 1 \\ r \neq s_1, \ldots, r \neq s_{ \alpha } } }^{ \tau_N( t_d ) } \Bigg[ 1 - \binom{ | \eta_{ d - 1 } | - | \{ i : s_i < r \} | }{ 2 } \{ 1 + O( N^{ -1 } ) \} \\
&\phantom{\prod_{ r = \tau_N( t_{ d - 1 } ) + 1 }^{ \tau_N( t_d ) } \Bigg[} \times \Bigg( c_N( r ) - \binom{ n -1 }{ 2 } D_N( r ) \Bigg) \mathds{ 1 }_{ \{ c_N( r ) - \binom{ n - 1 }{ 2 } D_N( r ) < \binom{ n }{ 2 }^{ -1 } \} } \Bigg] \\
&= \sum_{ \beta = 0 }^{ \tau_N( t_d ) - \tau_N( t_{ d - 1 } ) - \alpha } ( -1 )^{ \beta } ( 1 + O( N^{ -1 } ) )^{ \beta } \osum_{ \substack{ r_1 < \ldots < r_{ \beta } = \tau_N( t_{ d - 1 } ) + 1 \\ \forall i : r_i \neq s_1, \ldots, r_i \neq s_{ \alpha } } }^{ \tau_N( t_d ) } \\
&\phantom{=} \prod_{ j = 1 }^{ \beta } \Bigg[ \binom{ | \eta_{ d - 1 } | - | \{ i : s_i < r_j \} | }{ 2 } \Bigg( c_N( r_j ) - \binom{ n -1 }{ 2 } D_N( r_j ) \Bigg) \mathds{ 1 }_{ \{ c_N( r_j ) - \binom{ n - 1 }{ 2 } D_N( r_j ) < \binom{ n }{ 2 }^{ -1 } \} } \Bigg].
\end{align*}
The product of binomial coefficients depends only on the pattern of orderings between times denoted by $\{ s_i \}_{ i \in [ \alpha ] }$ and $\{ r_j \}_{ j \in [ \beta ] }$, but is otherwise independent of the exact values of the time points.
Hence we have the bound
\begin{align}
\chi_d \leq {}& \sum_{ \beta = 0 }^{ \tau_N( t_d ) - \tau_N( t_{ d - 1 } ) - \alpha } ( 1 + O( N^{ -1 } ) )^{ \alpha + \beta } ( \tilde{ Q }^{ \alpha } )_{ \eta_{ d - 1 } \eta_d } \nonumber \\
&{}\times \sum_{ ( \lambda, \mu ) \in \Pi_2( [ \alpha + \beta ] ) : | \lambda | = \alpha } \Bigg\{ \prod_{ r \in \mu } - \binom{ | \eta_{ d - 1 } | - | \{ i \in \lambda : i < r \} | }{ 2 } \Bigg\} \nonumber \\
&{}\times \osum_{ s_1 < \ldots < s_{ \alpha + \beta } = \tau_N( t_{ d - 1 } ) + 1 }^{ \tau_N( t_d ) } \prod_{ r = 1 }^{ \alpha + \beta } \Bigg[ \Bigg( c_N( s_r ) - \binom{ n -1 }{ 2 } D_N( s_r ) \Bigg) \nonumber \\
&{} \phantom{\osum_{ s_1 < \ldots < s_{ \alpha + \beta } = \tau_N( t_{ d - 1 } ) + 1 }^{ \tau_N( t_d ) } \prod_{ r = 1 }^{ \alpha + \beta } \Bigg[} \times \mathds{ 1 }_{ \{ c_N( s_r ) - \binom{ n - 1 }{ 2 } D_N( s_r ) < \binom{ n }{ 2 }^{ -1 } \} } \Bigg]  \label{binary_ub_1}
\end{align}
Now, we can see that
\begin{equation}\label{path_counting}
( \tilde{ Q }^{ \alpha } )_{ \eta_{ d - 1 } \eta_d } \sum_{ ( \lambda, \mu ) \in \Pi_2( [ \alpha + \beta ] ) : | \lambda | = \alpha } \Bigg\{ \prod_{ r \in \mu } - \binom{ | \eta_{ d - 1 } | - | \{ i \in \lambda : i < r \} | }{ 2 } \Bigg\} = ( Q^{ \alpha + \beta } )_{ \eta_{ d - 1 } \eta_d } 
\end{equation}
by noting that
\begin{equation*}
\sum_{ ( \lambda, \mu ) \in \Pi_2( [ \alpha + \beta ] ) : | \lambda | = \alpha } ( \tilde{ Q }^{ \alpha } )_{ \eta_{ d - 1 } \eta_d } = [ ( \mathbb{ I } + \tilde{ Q } )^{ \alpha + \beta } ]_{ \eta_{ d - 1 } \eta_d }
\end{equation*}
is the number of discrete time paths of length $\alpha + \beta$ from $\eta_{ d - 1 }$ to $\eta_d$ using $\alpha$ binary mergers and $\beta$ identity steps, where $\mathbb{ I }$ is the identity matrix of the same size as $\tilde{ Q }$.
In \eqref{path_counting}, each identity step results in multiplication by the corresponding diagonal entry of $Q$, which justifies the equality.
Substituting \eqref{path_counting} into \eqref{binary_ub_1} gives
\begin{align*}
\chi_d \leq {}& \sum_{ \beta = 0 }^{ \tau_N( t_d ) - \tau_N( t_{ d - 1 } ) - \alpha }  ( 1 + O( N^{ -1 } ) )^{ \alpha + \beta }  ( Q^{ \alpha + \beta } )_{ \eta_{ d - 1 } \eta_d } \nonumber \\
&{}\times \osum_{ s_1 < \ldots < s_{ \alpha + \beta } = \tau_N( t_{ d - 1 } ) + 1 }^{ \tau_N( t_d ) } \prod_{ r = 1 }^{ \alpha + \beta } \Bigg[ \Bigg( c_N( s_r ) - \binom{ n -1 }{ 2 } D_N( s_r ) \Bigg)  \mathds{ 1 }_{ \{ c_N( s_r ) - \binom{ n - 1 }{ 2 } D_N( s_r ) < \binom{ n }{ 2 }^{ -1 } \} } \Bigg].
\end{align*}
Taking a product of $k$ transition probabilities with $\alpha_d := | \eta_d | - | \eta_{ d - 1 } |$ for $d \in [ k ]$, and expanding that product, gives the bound 
\begin{align}
\lim_{ N \rightarrow \infty } \E\Bigg[ \prod_{ d = 1 }^k \chi_d \Bigg] \leq {}& \lim_{ N \rightarrow \infty } \E\Bigg[ \sum_{ \beta_1 = 0 }^{ \infty } \ldots \sum_{ \beta_k = 0 }^{ \infty } ( 1 + O( N^{ -1 } ) )^{ | \bm{ \alpha } | + | \bm{ \beta } | } \nonumber \\
&\times \Bigg\{ \prod_{ d = 1 }^k ( Q^{ \alpha_d + \beta_d } )_{ \eta_{ d - 1 } \eta_d } \mathds{ 1 }_{ \{ \tau_N( t_d ) - \tau_N( t_{ d - 1 } ) \geq \alpha_d + \beta_d \} }\Bigg\} \nonumber \\
&\times \osum_{ s_1^{ ( 1 ) } < \ldots < s_{ \alpha_1 + \beta_1 }^{ ( 1 ) } = \tau_N( t_0 ) + 1 }^{ \tau_N( t_1 ) } \ldots \osum_{ s_1^{ ( k ) } < \ldots < s_{ \alpha_k + \beta_k }^{ ( k ) } = \tau_N( t_{ k - 1 } ) + 1 }^{ \tau_N( t_k ) } \nonumber \\ 
&\phantom{\times}\prod_{ d = 1 }^k \prod_{ r = 1 }^{ \alpha_d + \beta_d } \Bigg\{ \Bigg( c_N( s_r^{ ( d ) } ) - \binom{ n -1 }{ 2 } D_N( s_r^{ ( d ) } ) \Bigg) \nonumber \\
&\phantom{\times \prod_{ d = 1 }^k \prod_{ r = 1 }^{ \alpha_d + \beta_d }}\times \mathds{ 1 }_{ \{ c_N( s_r^{ ( d ) } ) - \binom{ n - 1 }{ 2 } D_N( s_r^{ ( d ) } ) < \binom{ n }{ 2 }^{ -1 } \} } \Bigg\} \Bigg]. \label{binary_ub_2}
\end{align}
We show in the Appendix that the hypotheses of the Fubini and dominated convergence theorems are satisfied, so that the expectation and limit can be passed inside the $k$-fold infinite summation over $\bm{ \beta }$.
That leaves
\begin{align}
\lim_{ N \rightarrow \infty } &\E\Bigg[ \prod_{ d = 1 }^k \chi_d \Bigg] \leq \sum_{ \beta_1 = 0 }^{ \infty } \ldots \sum_{ \beta_k = 0 }^{ \infty } \Bigg\{ \prod_{ d = 1 }^k ( Q^{ \alpha_d + \beta_d } )_{ \eta_{ d - 1 } \eta_d } \Bigg\} \nonumber \\
&\times \lim_{ N \rightarrow \infty } \E\Bigg[ \Bigg\{ \prod_{ d = 1 }^k \mathds{ 1 }_{ \{ \tau_N( t_d ) - \tau_N( t_{ d - 1 } ) \geq \alpha_d + \beta_d \} } \Bigg\} \osum_{ s_1^{ ( 1 ) } < \ldots < s_{ \alpha_1 + \beta_1 }^{ ( 1 ) } = \tau_N( t_0 ) + 1 }^{ \tau_N( t_1 ) } \ldots \nonumber \\
& \osum_{ s_1^{ ( k ) } < \ldots < s_{ \alpha_k + \beta_k }^{ ( k ) } = \tau_N( t_{ k - 1 } ) + 1 }^{ \tau_N( t_k ) } \prod_{ d = 1 }^k \prod_{ r = 1 }^{ \alpha_d + \beta_d } \Bigg\{ \Bigg( c_N( s_r^{ ( d ) } ) - \binom{ n -1 }{ 2 } D_N( s_r^{ ( d ) } ) \Bigg) \nonumber \\
&\phantom{\osum_{ s_1^{ ( k ) } < \ldots < s_{ \alpha_k + \beta_k }^{ ( k ) } = \tau_N( t_{ k - 1 } ) + 1 }^{ \tau_N( t_k ) } \prod_{ r = 1 }^{ \alpha_d + \beta_d }} \times \mathds{ 1 }_{ \{ c_N( s_r^{ ( d ) } ) - \binom{ n - 1 }{ 2 } D_N( s_r^{ ( d ) } ) < \binom{ n }{ 2 }^{ -1 } \} } \Bigg\} \Bigg]. \label{binary_ub_3}
\end{align}
One further expansion of the form
\begin{align*}
&\prod_{ d = 1 }^k \prod_{ r = 1 }^{ \alpha_d + \beta_d } \Bigg( c_N( s_r^{ ( d ) } ) - \binom{ n -1 }{ 2 } D_N( s_r^{ ( d ) } ) \Bigg) \\
&= \sum_{ ( \lambda_1, \mu_1 ) \in \Pi_2( [ \alpha_1 + \beta_1 ] ) } \ldots \sum_{ ( \lambda_k, \mu_k ) \in \Pi_2( [ \alpha_k + \beta_k ] ) } \Bigg( - \binom{ n - 1 }{ 2 } \Bigg)^{ \sum_{ d = 1 }^k | \mu_d | } \\
&\phantom{= \sum_{ ( \lambda_1, \mu_1 ) \in \Pi_2 } } \times \prod_{ d = 1 }^k \Bigg\{ \prod_{ r \in \mu_d } D_N( s_r^{ ( d ) } ) \Bigg\} \Bigg\{ \prod_{ r \in \lambda_d } c_N( s_r^{ ( d ) } ) \Bigg\}  \prod_{ r = 1 }^{ \alpha_d + \beta_d } \mathds{ 1 }_{ \{ c_N( s_r^{ ( d ) } ) - \binom{ n - 1 }{ 2 } D_N( s_r^{ ( d ) } ) < \binom{ n }{ 2 }^{ -1 } \} },
\end{align*}
and the argument used to show that multiple mergers vanish in the limit demonstrates that we can replace every $[ c_N( s_r^{ ( d ) } ) - \binom{ n - 1 }{ 2 } D_N( s_r^{ ( d ) } ) ]$ with $c_N( s_r^{ ( d ) } )$ without affecting the limit.

Consider now a generic term in \eqref{binary_ub_3} for which $\prod_{ d = 1 }^k ( Q^{ \alpha_d + \beta_d } )_{ \eta_{ d - 1 } \eta_d }$ is positive, which requires $| \bm{ \beta } |$ to be even by \eqref{path_counting}.
We use the penultimate inequality in \eqref{time_ub}, and expand the resulting product to bound such the expectations in \eqref{binary_ub_3} by
\begin{align}
&\lim_{ N \rightarrow \infty } \E\Bigg[ \Bigg\{ \prod_{ d = 1 }^k \mathds{ 1 }_{ \{ \tau_N( t_d ) - \tau_N( t_{ d - 1 } ) \geq \alpha_d + \beta_d \} } \Bigg\}  \osum_{ s_1^{ ( 1 ) } < \ldots < s_{ \alpha_1 + \beta_1 }^{ ( 1 ) } = \tau_N( t_0 ) + 1 }^{ \tau_N( t_1 ) } \ldots \osum_{ s_1^{ ( k ) } < \ldots < s_{ \alpha_k + \beta_k }^{ ( k ) } = \tau_N( t_{ k - 1 } ) + 1 }^{ \tau_N( t_k ) } \nonumber \\
&\phantom{\lim_{ N \rightarrow \infty } \E\Bigg[} \prod_{ d = 1 }^k \prod_{ r = 1 }^{ \alpha_d + \beta_d } c_N( s_r^{ ( d ) } ) \mathds{ 1 }_{\{ c_N( s_r^{ ( d ) } ) - \binom{ n - 1 }{ 2 } D_N( s_r^{ ( d ) } ) < \binom{ n }{ 2 }^{ -1 } \}} \Bigg] \nonumber \\
&\leq \sum_{ j_1 = 0 }^{ \alpha_1 } \ldots \sum_{ j_k = 0 }^{ \alpha_k } \prod_{ d = 1 }^k \binom{ \alpha_d + \beta_d }{ j_d } \frac{ ( t_d - t_{ d - 1 } )^{ \alpha_d + \beta_d - j_d } }{ ( \alpha_d + \beta_d ) ! }  \lim_{ N \rightarrow \infty } \E[ c_N( \tau_N( t_d ) )^{ j_d } ] \nonumber \\
&= \prod_{ d = 1 }^k \frac{ ( t_d - t_{ d - 1 } )^{ \alpha_d + \beta_d } }{ ( \alpha_d + \beta_d ) ! }, \label{positive_terms_expansion}
\end{align}
where the last step follows from $c_N( t ) \leq 1$ and Lemma \ref{stopping_time_lemma} in the Appendix.
Expansion \eqref{time_expansion} also yields the lower bound
\begin{align*}
\osum_{ s_1 < \ldots < s_{ \alpha } = \tau_N( t_{ d - 1 } ) + 1 }^{ \tau_N( t_d ) } \prod_{ r = 1 }^{ \alpha } c_N( s_r ) \geq {}& \frac{ \{ t_d - t_{ d - 1 } - c_N( \tau_N( t_{ d - 1 } ) ) \}^{ \alpha } }{ \alpha ! } \\
&- \binom{ \alpha }{ 2 } \Bigg( \sum_{ t = \tau_N( t_{ d - 1 } ) + 1 }^{ \tau_N( t_d ) } c_N( t )^2 \Bigg) \frac{ ( t_d - t_{ d - 1 } + 1 )^{ \alpha - 2 } }{ \alpha ! }, 
\end{align*}
meaning that expectations in \eqref{binary_ub_3} with odd $| \bm{ \beta } |$ can be lower bounded by
\begin{align*}
&\lim_{ N \rightarrow \infty } \E\Bigg[ \Bigg\{ \prod_{ d = 1 }^k \mathds{ 1 }_{ \{ \tau_N( t_d ) - \tau_N( t_{ d - 1 } ) \geq \alpha_d + \beta_d \} } \Bigg\}  \osum_{ s_1^{ ( 1 ) } < \ldots < s_{ \alpha_1 + \beta_1 }^{ ( 1 ) } = \tau_N( t_0 ) + 1 }^{ \tau_N( t_1 ) } \ldots \osum_{ s_1^{ ( k ) } < \ldots < s_{ \alpha_k + \beta_k }^{ ( k ) } = \tau_N( t_{ k - 1 } ) + 1 }^{ \tau_N( t_k ) } \nonumber \\
&\phantom{\lim_{ N \rightarrow \infty } \E\Bigg[} \prod_{ d = 1 }^k \prod_{ r = 1 }^{ \alpha_d + \beta_d } c_N( s_r^{ ( d ) } ) \mathds{ 1 }_{ \{ c_N( s_r^{ ( d ) } ) - \binom{ n - 1 }{ 2 } D_N( s_r^{ ( d ) } ) < \binom{ n }{ 2 }^{ -1 } \} } \Bigg] \nonumber \\
&\geq \sum_{ ( \lambda, \mu ) \in \Pi_2( [ k ] ) } \E\Bigg[  \Bigg\{ \prod_{ d = 1 }^k \mathds{ 1 }_{ \{ \tau_N( t_d ) - \tau_N( t_{ d - 1 } ) \geq \alpha_d + \beta_d \} } \Bigg\}  \Bigg\{ \prod_{ d = \tau_N( t_0 ) + 1 }^{ \tau_N( t_k ) } \mathds{ 1 }_{\{ c_N( d ) - \binom{ n - 1 }{ 2 } D_N( d ) < \binom{ n }{ 2 }^{ -1 } \}} \Bigg\} \\
&\phantom{\geq}\times  \Bigg\{ \prod_{ d \in \lambda } \frac{ \{ t_d - t_{ d - 1 } - c_N( \tau_N( t_{ d - 1 } ) ) \}^{ \alpha_d + \beta_d } }{ ( \alpha_d + \beta_d ) ! } \Bigg\} \\
&\phantom{\geq} \times \prod_{ d \in \mu } - \binom{ \alpha_d + \beta_d }{ 2 } \Bigg( \sum_{ t = \tau_N( t_{ d - 1 } ) + 1 }^{ \tau_N( t_d ) } c_N( t )^2 \Bigg) \frac{ ( t_d - t_{ d - 1 } + 1 )^{ \alpha_d + \beta_d - 2 } }{ ( \alpha_d + \beta_d ) ! } \Bigg].
\end{align*}
Using the lower bound for terms with a positive sign and upper bound for terms with a negative sign from
\begin{equation*}
0 \leq \sum_{ t = \tau_N( t_{ d - 1 } ) + 1 }^{ \tau_N( t_d ) } c_N( t )^2 \leq \sum_{ t = \tau_N( t_{ d - 1 } ) + 1 }^{ \tau_N( t_d ) } c_N( t ) \leq ( t_d - t_{ d - 1 } + 1 )
\end{equation*}
on all factors $d \in \mu$ apart from one, and then \eqref{binary_bound_2}, shows that terms with $| \mu | > 0$ vanish in the limit, after which an expansion akin to \eqref{positive_terms_expansion} results in
\begin{align}
&\lim_{ N \rightarrow \infty } \E\Bigg[ \Bigg\{ \prod_{ d = 1 }^k \mathds{ 1 }_{ \{ \tau_N( t_d ) - \tau_N( t_{ d - 1 } ) \geq \alpha_d + \beta_d \} } \Bigg\} \Bigg\{ \prod_{ d = \tau_N( t_0 ) + 1 }^{ \tau_N( t_k ) } \mathds{ 1 }_{\{ c_N( d ) - \binom{ n - 1 }{ 2 } D_N( d ) < \binom{ n }{ 2 }^{ -1 } \} } \Bigg\} \nonumber \\
&\phantom{\lim_{ N \rightarrow \infty } \E\Bigg[}\times  \osum_{ s_1^{ ( 1 ) } < \ldots < s_{ \alpha_1 + \beta_1 }^{ ( 1 ) } = \tau_N( t_0 ) + 1 }^{ \tau_N( t_1 ) } \ldots \osum_{ s_1^{ ( k ) } < \ldots < s_{ \alpha_k + \beta_k }^{ ( k ) } = \tau_N( t_{ k - 1 } ) + 1 }^{ \tau_N( t_k ) }  \prod_{ d = 1 }^k \prod_{ r = 1 }^{ \alpha_d + \beta_d } c_N( s_r^{ ( d ) } ) \Bigg] \nonumber \\
\geq {}& \Bigg\{ \prod_{ d = 1 }^k \frac{ ( t_d - t_{ d - 1 } )^{ \alpha_d + \beta_d } }{ ( \alpha_d + \beta_d ) ! } \Bigg\} \lim_{ N \rightarrow \infty }\E\Bigg[ \Bigg\{ \prod_{ d = 1 }^k \mathds{ 1 }_{ \{ \tau_N( t_d ) - \tau_N( t_{ d - 1 } ) \geq \alpha_d + \beta_d \} } \Bigg\} \nonumber \\
&\phantom{\lim_{ N \rightarrow \infty } \E\Bigg[} \times \Bigg\{ \prod_{ d = \tau_N( t_0 ) + 1 }^{ \tau_N( t_k ) } \mathds{ 1 }_{\{ c_N( d ) - \binom{ n - 1 }{ 2 } D_N( d ) < \binom{ n }{ 2 }^{ -1 } \} } \Bigg\} \Bigg]. \label{negative_terms_expansion}
\end{align}
Now, for $0 \leq s < t < \infty$ and $\alpha \in \mathbb{ N }$, we have
\begin{align*}
\{ \tau_N( t ) - \tau_N( s ) < \alpha \} &\subseteq \Bigg\{ \min\Bigg\{ p \geq 1 : \sum_{ r = \tau_N( s ) + 1 }^{ \tau_N( s ) + p } c_N( r ) \geq t - s \Bigg\} < \alpha \Bigg\} \\
&= \Bigg\{ \min\Bigg\{ p \geq 1 : \sum_{ r = 1 }^p c_N( \tau_N( s ) + r ) \geq t - s \Bigg\} < \alpha \Bigg\} \\
&\subseteq \Bigg\{ \sum_{ r = 1 }^{ \alpha } c_N( \tau_N( s ) + r ) \geq t - s \Bigg\},
\end{align*}
and hence by De Morgan's law and the union bound,
\begin{align}
&\E\Bigg[ \Bigg\{ \prod_{ d = 1 }^k \mathds{ 1 }_{ \{ \tau_N( t_d ) - \tau_N( t_{ d - 1 } ) \geq \alpha_d \} } \Bigg\} \prod_{ d = \tau_N( t_0 ) + 1 }^{ \tau_N( t_k ) } \mathds{ 1 }_{\{ c_N( d ) - \binom{ n - 1 }{ 2 } D_N( d ) < \binom{ n }{ 2 }^{ -1 } \} } \Bigg] \nonumber \\
&\geq 1 - \sum_{ d = 1 }^k \bbP\Bigg( \sum_{ r = 1 }^{ \alpha_d } c_N( \tau_N( t_{ d - 1 } ) + r ) \geq t_d - t_{ d - 1 } \Bigg) -  \mathbb{E}\Bigg[ \sum_{ d = \tau_N( t_0 ) + 1 }^{ \tau_N( t_k ) }\mathds{ 1 }_{ \{ c_N( d ) - \binom{ n - 1 }{ 2 } D_N( d ) \geq \binom{ n }{ 2 }^{ -1 } \} } \Bigg]\ \nonumber \\
&\geq 1 - \sum_{ d = 1 }^k \frac{ 1 }{ t_d - t_{ d - 1 } } \sum_{ r = 1 }^{ \alpha_d } \E[ c_N( \tau_N( t_{ d - 1 } + r ) ) ] \nonumber \\
&\phantom{\geq 1 } {}- \binom{n}{2}^2 \E\Bigg[ \sum_{ d = \tau_N( t_0 ) + 1 }^{ \tau_N( t_k ) } \Bigg( c_N( d ) - \binom{ n - 1 }{ 2 } D_N( d ) \Bigg)^2 \Bigg] \to 1 \label{indicator_lb}
\end{align}
as $N \to \infty$, where the second inequality follows by applying  the Markov inequality to the first sum, and two invocations of Lemma \ref{optional_stopping_lemma} with the conditional Markov inequality in between to the second sum.
Convergence follows from \eqref{big_merger_bound}, \eqref{binary_bound_2}, and Lemma \ref{stopping_time_lemma} in the Appendix.
Substituting \eqref{positive_terms_expansion} for terms in \eqref{binary_ub_3} with $| \bm{ \beta } |$ even, and \eqref{negative_terms_expansion} as well as \eqref{indicator_lb} for terms in \eqref{binary_ub_3} with $| \bm{ \beta } |$ odd, and using the fact that $( Q^j )_{ \eta_{ d - 1 } \eta_d } = 0$ for $j < \alpha_d$ gives
\begin{align*}
\lim_{ N \rightarrow \infty } \E\Bigg[ \prod_{ d = 1 }^k \chi_d \Bigg] &\leq \sum_{ \beta_1 = 0 }^{ \infty } \ldots \sum_{ \beta_k = 0 }^{ \infty } \Bigg\{ \prod_{ d = 1 }^k ( Q^{ \alpha_d + \beta_d } )_{ \eta_{ d - 1 } \eta_d } \frac{ ( t_d - t_{ d - 1 } )^{ \alpha_d + \beta_d } }{ ( \alpha_d + \beta_d ) ! }  \Bigg\} = \prod_{ d = 1 }^k ( e^{ Q ( t_d - t_{ d - 1 } ) } )_{ \eta_{ d - 1 } \eta_d },
\end{align*}
where the required absolute convergence for the final rearrangement is guaranteed by the calculation in the Appendix used to bound \eqref{binary_ub_2}.

In the other direction, Lemma \ref{kingman_bounds}, Cases 1 and 2 give the lower bound
\begin{align*}
\chi_d \geq {}& \osum_{ s_1 < \ldots < s_{ \alpha } = \tau_N( t_{ d - 1 } ) + 1 }^{ \tau_N( t_d ) } ( \tilde{ Q }^{ \alpha } )_{ \eta_{ d - 1 } \eta_d } \Bigg\{ \prod_{ r = 1 }^{ \alpha } \mathds{ 1 }_{ \{ c_N( s_r) > \binom{ n - 2 }{ 2 } D_N( s_r ) \} } \\
&\times \Bigg( c_N( s_r ) - \binom{ n - 2 }{ 2 } \{ 1+ O( N^{ -1 } ) \} D_N( s_r ) \Bigg) \Bigg\} \\
&\times \prod_{ \substack{ r = \tau_N( t_{ d - 1 } ) + 1 \\ r \neq s_1, \ldots, r \neq s_{ \alpha } } }^{ \tau_N( t_d ) } \Bigg\{ 1 - \binom{ n }{ 2 } \frac{ ( 3 n - 1 ) ( n - 2 ) }{ 6 N^2 } \{ 1 + O( N^{ -1 } ) \} \\
&\phantom{\times} - \binom{ | \eta_{ d - 1 } | - | \{ i : s_i < r \} | }{ 2 } \{ 1 + O( N^{ -1 } ) \} c_N( r ) \Bigg\} \mathds{1}_{ \{ c_N( r ) < \binom{ n }{ 2 }^{ -1 }  \} }.
\end{align*}
A multinomial expansion of the product spanning the last two lines yields 
\begin{align*}
\chi_d \geq {}& \Bigg\{ \prod_{ r = \tau_N( t_{ d - 1 } + 1 ) }^{ \tau_N( t_d ) } \mathds{1}_{ \{ c_N( r ) < \binom{ n }{ 2 }^{ -1 }  \} } \Bigg\} \sum_{ \beta = 0 }^{ \tau_N( t_d ) - \tau_N( t_{ d - 1 } ) - \alpha } \frac{ ( 1 + O( N^{ -1 } ) )^{ \beta } }{ ( 1 + O( N^{ -2 } ) )^{ \alpha + \beta } } \\
&\times ( \tilde{ Q }^{ \alpha } )_{ \eta_{ d - 1 } \eta_d }  \sum_{ ( \lambda, \mu ) \in \Pi_2( [ \alpha + \beta ] ) : | \lambda | = \alpha } \Bigg\{ \prod_{ r \in \mu } - \binom{ | \eta_{ d - 1 } | - | \{ i \in \lambda : i < r \} | }{ 2 } \Bigg\} \\
&\times \Bigg( 1 - \binom{ n }{ 2 }^2 \frac{ 1 }{ N^2 } \Bigg)^{ \tau_N( t_d ) - \tau_N( t_{ d - 1 } ) } \osum_{ s_1 < \ldots < s_{ \alpha + \beta } = \tau_N( t_{ d - 1 } ) + 1 }^{ \tau_N( t_d ) } \Bigg\{ \prod_{ r \in \mu } c_N( s_r ) \Bigg\} \\
&\times \Bigg\{ \prod_{ r \in \lambda } \mathds{ 1 }_{ \{ c_N( s_r) > \binom{ n - 2 }{ 2 } D_N( s_r ) \} }  \Bigg( c_N( s_r ) - \binom{ n - 2 }{ 2 } \{ 1 + O( N^{ -1 } ) \} D_N( s_r ) \Bigg) \Bigg\} , 
\end{align*}
for sufficiently large $N$; and expanding the product over $\lambda$ gives
\begin{align*}
\chi_d \geq {}& \Bigg\{ \prod_{ r = \tau_N( t_{ d - 1 } ) + 1 }^{ \tau_N( t_d ) } \mathds{1}_{ \{ c_N( r ) < \binom{ n }{ 2 }^{ -1 }  \} } \mathds{ 1 }_{ \{ c_N( r ) > \binom{ n - 2 }{ 2 } D_N( r ) \} } \Bigg\} \\
&\times \sum_{ \beta = 0 }^{ \tau_N( t_d ) - \tau_N( t_{ d - 1 } ) - \alpha } ( \tilde{ Q }^{ \alpha } )_{ \eta_{ d - 1 } \eta_d } \sum_{ ( \lambda, \mu, \pi ) \in \Pi_3( [ \alpha + \beta ] ) : | \mu | = \beta } \left\{-\binom{ n - 2 }{ 2 }\right\}^{ | \pi | }  \\
&\times  \frac{ ( 1 + O( N^{ -1 } ) )^{ \beta + | \pi | } }{ ( 1 + O( N^{ -2 } ) )^{ \alpha + \beta } } \Bigg( 1 - \binom{ n }{ 2 }^2 \frac{ 1 }{ N^2 } \Bigg)^{ \tau_N( t_d ) - \tau_N( t_{ d - 1 } ) } \\
&\times \Bigg\{ \prod_{ r \in \mu } - \binom{ | \eta_{ d - 1 } | - | \{ i \in \lambda \cup \pi : i < r \} | }{ 2 } \Bigg\} \\
&\times \osum_{ s_1 < \ldots < s_{ \alpha + \beta } = \tau_N( t_{ d - 1 } ) + 1 }^{ \tau_N( t_d ) } \Bigg\{ \prod_{ r \in \lambda \cup \mu } c_N( s_r ) \Bigg\} \prod_{ r \in \pi }  D_N( s_r ).
\end{align*}
Via a further multinomial expansion, the lower bound for the $k$-step transition probability can be written as
\begin{align}
\lim_{ N \rightarrow \infty } &\E\Bigg[ \prod_{ d = 1 }^k \chi_d \Bigg] \geq \lim_{ N \rightarrow \infty } \E\Bigg[ \Bigg( 1 - \binom{ n }{ 2 }^2 \frac{ 1 }{ N^2 } \Bigg)^{ \tau_N( t_k ) - \tau_N( t_0 ) } \nonumber \\
&\times \Bigg\{ \prod_{ r = \tau_N( t_0 ) + 1 }^{ \tau_N( t_d ) } \mathds{1}_{ \{ c_N( r ) < \binom{ n }{ 2 }^{ -1 }  \} } \mathds{ 1 }_{ \{ c_N( r ) > \binom{ n - 2 }{ 2 } D_N( r ) \} } \Bigg\} \sum_{ \beta_1 = 0 }^{ \infty } \ldots \nonumber \\
&\sum_{ \beta_k = 0 }^{ \infty } \sum_{ ( \lambda_1, \mu_1, \pi_1 ) \in \Pi_3( [ \alpha_1 + \beta_1 ] ) : | \mu_1 | = \beta_1 } \ldots \sum_{ ( \lambda_k, \mu_k, \pi_k ) \in \Pi_3( [ \alpha_k + \beta_k ] ) : | \mu_k | = \beta_k } \nonumber \\
&\times \left\{-\binom{ n - 2 }{ 2 }\right\}^{ \sum_{ d = 1 }^k | \pi_d | } \frac{ ( 1 + O( N^{ -1 } ) )^{ | \bm{ \beta } | + \sum_{ d = 1 }^k | \pi_d | } }{ ( 1 + O( N^{ -2 } ) )^{ | \bm{ \alpha } | + | \bm{ \beta } | } } \nonumber \\
&\times \Bigg\{ \prod_{ d = 1 }^k ( \tilde{ Q }^{ \alpha_d } )_{ \eta_{ d - 1 } \eta_d } \prod_{ r \in \mu_d } - \binom{ | \eta_{ d - 1 } | - | \{ i \in \lambda_d \cup \pi_d : i < r \} | }{ 2 } \Bigg\} \nonumber \\
&\times  \osum_{ s_1^{ ( 1 ) } < \ldots < s_{ \alpha_1 + \beta_1 }^{ ( 1 ) } = \tau_N( t_0 ) + 1 }^{ \tau_N( t_1 ) } \ldots \osum_{ s_1^{ ( k ) } < \ldots < s_{ \alpha_k + \beta_k }^{ ( k ) } = \tau_N( t_{ k - 1 } ) + 1 }^{ \tau_N( t_k ) } \nonumber \\ 
&\prod_{ d = 1 }^k \mathds{ 1 }_{ \{ \tau_N( t_d ) - \tau_N( t_{ d - 1 } ) \geq \alpha_d + \beta_d \} } \Bigg\{ \prod_{ r \in \lambda_d \cup \mu_d } c_N( s_r^{ ( d ) } ) \Bigg\} \prod_{ r \in \pi_d }  D_N( s_r^{ ( d ) } ) \Bigg]. \label{pre-fubini_lb}
\end{align}
As for the upper bound, we verify in the Appendix that passing the expectation and the limit through the infinite sums is justified, whereupon \eqref{big_merger_bound}, and the argument used to show that mergers involving more than two lineages cannot happen in the limit, implies that the contribution of terms with $\sum_{ d = 1 }^k | \pi_d | > 0$ vanishes in the limit.
Applying \eqref{path_counting} to the remaining terms gives
\begin{align*}
\lim_{ N \rightarrow \infty } \E\Bigg[ \prod_{ d = 1 }^k \chi_d \Bigg] \geq {}& \sum_{ \beta_1 = 0 }^{ \infty } \ldots \sum_{ \beta_k = 0 }^{ \infty }  \Bigg\{ \prod_{ d = 1 }^k ( Q^{ \alpha_d + \beta_d } )_{ \eta_{ d - 1 } \eta_d }  \Bigg\} \\
& \times \lim_{ N \rightarrow \infty } \E\Bigg[  \Bigg( 1 - \binom{ n }{ 2 }^2 \frac{ 1 }{ N^2 } \Bigg)^{ \tau_N( t_k ) - \tau_N( t_0 ) }   \Bigg\{ \prod_{ d = 1 }^k \mathds{ 1 }_{ \{ \tau_N( t_d ) - \tau_N( t_{ d - 1 } ) \geq \alpha_d + \beta_d \} } \Bigg\} \\
& \times \Bigg\{ \prod_{ r = \tau_N( t_0 ) + 1 }^{ \tau_N( t_d ) } \mathds{1}_{ \{ c_N( r ) < \binom{ n }{ 2 }^{ -1 }  \} } \mathds{ 1 }_{ \{ c_N( r ) > \binom{ n - 2 }{ 2 } D_N( r ) \} } \Bigg\} \\
&\times \osum_{ s_1^{ ( 1 ) } < \ldots < s_{ \alpha_1 + \beta_1 }^{ ( 1 ) } = \tau_N( t_0 ) + 1 }^{ \tau_N( t_1 ) } \ldots \osum_{ s_1^{ ( k ) } < \ldots < s_{ \alpha_k + \beta_k }^{ ( k ) } = \tau_N( t_{ k - 1 } ) + 1 }^{ \tau_N( t_k ) } \prod_{ d = 1 }^k \prod_{ r = 1 }^{ \alpha_d + \beta_d } c_N( s_r^{ ( d ) } ) \Bigg].
\end{align*}
Likewise, with appropriately modified indicator functions, the expansion \eqref{positive_terms_expansion} applied to terms with negative sign ($| \bm{ \beta } |$ odd), and \eqref{negative_terms_expansion} applied to terms with a positive sign ($| \bm{ \beta } |$ even), followed by Lemma \ref{stopping_time_lemma} in the Appendix, show that 
\begin{align*}
\lim_{ N \rightarrow \infty } \E\Bigg[ \prod_{ d = 1 }^k \chi_d \Bigg] \geq {}& \sum_{ \beta_1 = 0 }^{ \infty } \ldots \sum_{ \beta_k = 0 }^{ \infty }  \Bigg\{ \prod_{ d = 1 }^k ( Q^{ \alpha_d + \beta_d } )_{ \eta_{ d - 1 } \eta_d } \frac{ ( t_d - t_{ d - 1 } )^{ \alpha_d + \beta_d } }{ ( \alpha_d + \beta_d ) ! } \Bigg\} \\
&\times \lim_{ N \rightarrow \infty } \E\Bigg[  \Bigg( 1 - \binom{ n }{ 2 }^2 \frac{ 1 }{ N^2 } \Bigg)^{ \tau_N( t_k ) - \tau_N( t_0 ) } \Bigg\{ \prod_{ d = 1 }^k \mathds{ 1 }_{ \{ \tau_N( t_d ) - \tau_N( t_{ d - 1 } ) \geq \alpha_d + \beta_d \} } \Bigg\} \\
&\phantom{\times \lim_{ N \rightarrow \infty } \E\Bigg[} \times \Bigg\{ \prod_{ r = \tau_N( t_0 ) + 1 }^{ \tau_N( t_d ) } \mathds{1}_{ \{ c_N( r ) < \binom{ n }{ 2 }^{ -1 }  \} } \mathds{ 1 }_{ \{ c_N( r ) > \binom{ n - 2 }{ 2 } D_N( r ) \} } \Bigg\} \Bigg].
\end{align*}
For terms with even $| \bm{ \beta } |$, the bound $1 - x \leq e^{ -x }$ gives the inequality
\begin{align*}
&\E\Bigg[  \Bigg( 1 - \binom{ n }{ 2 }^2 \frac{ 1 }{ N^2 } \Bigg)^{ \tau_N( t_k ) - \tau_N( t_0 ) } \Bigg\{ \prod_{ d = 1 }^k \mathds{ 1 }_{ \{ \tau_N( t_d ) - \tau_N( t_{ d - 1 } ) \geq \alpha_d + \beta_d \} } \Bigg\} \\
&\phantom{\E\Bigg[} \times  \Bigg\{ \prod_{ r = \tau_N( t_0 ) + 1 }^{ \tau_N( t_d ) } \mathds{1}_{ \{ c_N( r ) < \binom{ n }{ 2 }^{ -1 }  \} } \mathds{ 1 }_{ \{ c_N( r ) > \binom{ n - 2 }{ 2 } D_N( r ) \} } \Bigg\} \Bigg] \\
 &\geq \E\Bigg[ \Bigg\{ \prod_{ d = 1 }^k \mathds{ 1 }_{ \{ \tau_N( t_d ) - \tau_N( t_{ d - 1 } ) \geq \alpha_d + \beta_d \} } \Bigg\}  \times \Bigg\{ \prod_{ r = \tau_N( t_0 ) + 1 }^{ \tau_N( t_d ) } \mathds{1}_{ \{ c_N( r ) < \binom{ n }{ 2 }^{ -1 }  \} } \mathds{ 1 }_{ \{ c_N( r ) > \binom{ n - 2 }{ 2 } D_N( r ) \} } \Bigg\}  \\
&\phantom{\geq \E\Bigg[}+ \log\Bigg( 1 - \binom{ n }{ 2 }^2 \frac{ 1 }{ N^2 } \Bigg) [ \tau_N( t_k ) - \tau_N( t_0 ) ] \Bigg].
\end{align*}
Assumption \eqref{tau_bound} ensures that the second term vanishes as $N \to \infty$, and we show in Lemma \ref{indicator_lemma} in the Appendix that the expectation of the product of indicators converges to 1.
Hence,
\begin{align*}
&\lim_{ N \rightarrow \infty } \E\Bigg[  \Bigg( 1 - \binom{ n }{ 2 }^2 \frac{ 1 }{ N^2 } \Bigg)^{ \tau_N( t_k ) - \tau_N( t_0 ) } \Bigg\{ \prod_{ d = 1 }^k \mathds{ 1 }_{ \{ \tau_N( t_d ) - \tau_N( t_{ d - 1 } ) \geq \alpha_d + \beta_d \} } \Bigg\} \\
&\phantom{\lim_{ N \rightarrow \infty } \E\Bigg[} \times  \Bigg\{ \prod_{ r = \tau_N( t_0 ) + 1 }^{ \tau_N( t_d ) } \mathds{1}_{ \{ c_N( r ) < \binom{ n }{ 2 }^{ -1 }  \} } \mathds{ 1 }_{ \{ c_N( r ) > \binom{ n - 2 }{ 2 } D_N( r ) \} } \Bigg\} \Bigg] \\
 &\geq 1 + \lim_{ N \rightarrow \infty } N C_{ t_k, t_0 } \log\Bigg( 1 - \binom{ n }{ 2 }^2 \frac{ 1 }{ N^2 } \Bigg) = 1.
\end{align*}
A corresponding upper bound of 1 for terms with odd $| \bm{ \beta } |$ is immediate, resulting in the overall lower bound
\begin{align*}
\lim_{ N \rightarrow \infty } \E\Bigg[ \prod_{ d = 1 }^k \chi_d \Bigg] &\geq \sum_{ \beta_1 = 0 }^{ \infty } \ldots \sum_{ \beta_k = 0 }^{ \infty }  \Bigg\{ \prod_{ d = 1 }^k ( Q^{ \alpha_d + \beta_d } )_{ \eta_{ d - 1 } \eta_d } \frac{ ( t_d - t_{ d - 1 } )^{ \alpha_d + \beta_d } }{ ( \alpha_d + \beta_d ) ! } \Bigg\} \\
&\geq \prod_{ d = 1 }^k ( e^{ Q ( t_d - t_{ d - 1 } ) } )_{ \eta_{ d - 1 } \eta_d },
\end{align*}
which again follows from the fact that $( Q^j )_{ \eta_{ d - 1 } \eta_d } = 0$ for $j < \alpha_d$, and the bound obtained for \eqref{pre-fubini_lb} in the Appendix.
\end{proof}

Our next aim is to show that particle filters with multinomial resampling can satisfy \eqref{big_merger_bound} -- \eqref{tau_bound}.
We require one further preparatory lemma.
\vskip 11pt
\begin{lem}\label{conditional_lemma}
Suppose the kernels $K_t( x, dx' )$ in Algorithm \ref{particle_filter} have respective densities $q_t( x, x' ) dx'$, and that
\begin{gather}\label{weight_bound}
\frac{ 1 }{ a } \leq g_t( x, x' ) \leq a,\\
\label{mixing_bound}
\varepsilon h( x' ) \leq q_t( x, x' ) \leq \frac{ 1 }{ \varepsilon } h( x' ),
\end{gather}
for some constants $0 < \varepsilon \leq 1 \leq a < \infty$, and probability density $h( x )$, uniformly in $t$ as well as both arguments.
Then SMC algorithms with multinomial resampling (i.e.,~Algorithm \ref{particle_filter}, for which the \texttt{Resample} method on line 6 is Algorithm \ref{multinomial_resampling}) satisfy
\begin{align}\label{D_N_bound}
\E[ D_N( t ) | \calF_{ t - 1 } ] &\leq \frac{ C_1 }{ N } \E[ c_N( t ) | \calF_{ t - 1 } ],\\
\label{c_N_2_bound}
\E[ c_N( t )^2 | \calF_{ t - 1 } ] &\leq \frac{ C_2 }{ N } \E[ c_N( t ) | \calF_{ t - 1 } ],
\end{align}
for constants $C_1, C_2 > 0$ that are independent of $N$, and 
\begin{equation}\label{c_N_bound}
\frac{ \varepsilon^4 }{ N a^4 } \leq \E[ c_N( t ) | \calF_{ t - 1 } ] \leq \frac{ a^4 }{ N \varepsilon^4 }.
\end{equation}
\end{lem}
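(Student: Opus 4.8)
The plan is to condition first on the resampling weights $\w_t$ rather than on $\calF_t$. Under multinomial resampling (Algorithm \ref{multinomial_resampling}) the offspring vector $\bm{\nu}_t$ is, conditionally on $\w_t$, distributed as $\operatorname{Multinomial}(N,\w_t)$, and its only source of randomness given $\w_t$ is the fresh collection of categorical draws $\a_t$, which is independent of everything generated at earlier times; hence $\bm{\nu}_t$ is conditionally independent of $\calF_{t-1}$ given $\w_t$. Since $c_N(t)$, $c_N(t)^2$ and $D_N(t)$ are functions of $\bm{\nu}_t$ alone, it therefore suffices to prove the three estimates in the pointwise-in-$\w_t$ form obtained by replacing $\calF_{t-1}$ by $\w_t$ throughout: applying $\E[\,\cdot\,|\,\calF_{t-1}]$ to both sides of such a (pointwise, hence $\sigma(\w_t)$-measurable) inequality and using the conditional independence to collapse the tower $\E[\E[\,\cdot\,|\,\w_t]\,|\,\calF_{t-1}] = \E[\,\cdot\,|\,\calF_{t-1}]$ recovers \eqref{D_N_bound}, \eqref{c_N_2_bound} and \eqref{c_N_bound} as stated.

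To compute the conditional expectations given $\w_t$ I would use the multinomial factorial-moment identity $\E[(\nu_t^{(i_1)})_{b_1}\cdots(\nu_t^{(i_m)})_{b_m}\,|\,\w_t] = (N)_{b_1+\cdots+b_m}\,(w_t^{(i_1)})^{b_1}\cdots(w_t^{(i_m)})^{b_m}$ for pairwise-distinct indices. Rewriting the powers appearing in $c_N(t)^2$ and $D_N(t)$ in falling factorials via $\nu^2 = (\nu)_2 + \nu$, $\nu^2(\nu-1) = (\nu)_3 + 2(\nu)_2$ and $(\nu)_2^2 = (\nu)_4 + 4(\nu)_3 + 2(\nu)_2$, and applying the identity term by term, yields closed forms in the power sums $P_k := \sum_{i=1}^N (w_t^{(i)})^k$ and in ratios of falling factorials of $N$. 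One finds $\E[c_N(t)\,|\,\w_t] = P_2$ and, after discarding manifestly nonpositive contributions such as $-P_4$,
\[
\E[c_N(t)^2\,|\,\w_t] = \frac{(N)_4}{(N)_2^2}\,P_2^2 + \frac{4(N)_3}{(N)_2^2}\,P_3 + \frac{2}{(N)_2}\,P_2, \qquad \E[D_N(t)\,|\,\w_t] \leq P_3 + P_2^2 + \frac{3}{N}\,P_2.
\]

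The weight bound \eqref{weight_bound} enters only through the elementary pointwise control $\tfrac{1}{Na^2} \leq w_t^{(i)} \leq \tfrac{a^2}{N}$, which follows from normalisation of $w_t^{(i)} \propto g_t$ together with $1/a \leq g_t \leq a$. This gives $P_2 \leq \max_i w_t^{(i)} \leq a^2/N$, whence $P_2^2 \leq (a^2/N)P_2$; $P_{k+1} \leq (\max_i w_t^{(i)})P_k \leq (a^2/N)P_k$; and $P_2 \geq N(Na^2)^{-2} = (Na^4)^{-1}$. Combined with $(N)_4/(N)_2^2 \leq 1$, $(N)_3/(N)_2^2 = O(N^{-1})$ and $(N)_2^{-1} = O(N^{-2})$, substitution into the displayed forms bounds both $\E[c_N(t)^2\,|\,\w_t]$ and $\E[D_N(t)\,|\,\w_t]$ by $(C/N)P_2 = (C/N)\E[c_N(t)\,|\,\w_t]$ for a suitable $N$-independent $C$, which gives \eqref{D_N_bound} and \eqref{c_N_2_bound}; and $(Na^4)^{-1} \leq P_2 = \E[c_N(t)\,|\,\w_t] \leq a^2/N \leq a^4/(N\varepsilon^4)$, using $\varepsilon \leq 1 \leq a$, gives \eqref{c_N_bound}. (The mixing bound \eqref{mixing_bound} is not needed for these three estimates; it is used only in the subsequent verification of \eqref{big_merger_bound}--\eqref{tau_bound}.)

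The one genuinely delicate point is the bookkeeping for $D_N(t)$: because it contains the nested sum $\sum_{j\neq i}(\nu_t^{(j)})^2$ multiplied by $(\nu_t^{(i)})_2$, one must expand it into falling factorials of the $\nu_t^{(i)}$ and keep careful track of which ratios of the form $N^{-1}(N)_j/(N)_2$ supply the extra factor $N^{-1}$ that \eqref{D_N_bound} demands --- it is precisely the division by $N$ in the definition of $D_N(t)$, together with the fact that the $P_3$, $P_2^2$ and $P_4$ terms are each already $O(N^{-1})$ smaller than $P_2$, that makes the bound work. The corresponding computation for $c_N(t)^2$ is lighter, and the measure-theoretic tower argument of the first paragraph is routine.
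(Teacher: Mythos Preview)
Your multinomial moment calculations conditioning on $\w_t$ are correct, but the measure-theoretic bridge to $\calF_{t-1}$ rests on a false premise. In the paper's reversed labelling the forward algorithm runs as $t$ \emph{decreases}: a particle at time $t$ has its offspring at time $t-1$. Hence $\calF_{t-1}=\sigma(\bm{\nu}_1,\ldots,\bm{\nu}_{t-1})$ is generated by resampling steps that occur \emph{after}, in forward algorithm time, the draw of $\a_t$. The categorical draw $\a_t$ is indeed fresh given $\w_t$, but it then feeds into the propagation $X_{t-1}^{(i)}\sim K_{t-1}(X_t^{(a_t^{(i)})},\cdot)$, hence into $\w_{t-1}$, hence into $\bm{\nu}_{t-1}\in\calF_{t-1}$. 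So $\bm{\nu}_t$ is \emph{not} conditionally independent of $\calF_{t-1}$ given $\w_t$, and your collapsed tower $\E[\E[\,\cdot\,|\,\w_t]\,|\,\calF_{t-1}]=\E[\,\cdot\,|\,\calF_{t-1}]$ fails.

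This is exactly why the paper conditions instead on $(\a_{t+1},\X_{t+1},\X_t,\X_{t-1},\w_{t-1})$ and works with the \emph{posterior} law of $\a_t$: conditioning on the forward-future locations $\X_{t-1}$ brings in the transition density via Bayes, so that $a_t^{(i)}$ is categorical with unnormalised probabilities $g_t(\cdot)\,q_{t-1}(X_t^{(\cdot)},X_{t-1}^{(i)})$, not just $g_t(\cdot)$. The mixing bound \eqref{mixing_bound} is precisely what controls the extra $q_{t-1}$ factor and is essential for the lemma as stated; your claim that it is dispensable is a symptom of the same time-direction confusion. Your pointwise control $w_t^{(i)}\in[(Na^2)^{-1},a^2/N]$ bounds only the \emph{prior} resampling probabilities, whereas the posterior probabilities relevant to $\E[\,\cdot\,|\,\calF_{t-1}]$ live in the wider interval governed by $a/\varepsilon$. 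The paper then replaces your direct multinomial-moment computation with a balls-in-boxes coupling (the ``$I$-increasing'' argument) to push expectations under this posterior law to a dominating binomial, after which \eqref{mosimann} plays the role your factorial-moment identity would have played.
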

\vskip 11pt
\begin{rmk}
Assumptions \eqref{weight_bound} and \eqref{mixing_bound} are strong, and can only be expected to hold on compact state spaces.
Many rigorous results about SMC require similarly strong assumptions, but are robust to violations of them in practice \cite{DelMoral01, Chopin04, Kuensch05, Cerou11, Jacob15}.
\end{rmk}
\begin{proof}[Proof of Lemma \ref{conditional_lemma}]
Recall that our reverse-time perspective results in SMC algorithms whose time steps progress backwards through time points $t + 1, t, t - 1, \ldots, 0$.
Thus, for any integrable function $f( \a_t )$, the forwards-in-time Markov property of SMC algorithms gives
\begin{align*}
\E[ f( \a_t ) | \calF_{ t - 1 } ] &= \E[ \E[ f( \a_t ) | \X_{ t - 1 }, \w_{ t - 1 } ] | \calF_{ t - 1 } ]\\
&= \E[ \E[ f( \a_t ) | \a_{ t + 1 }, \X_{ t + 1 }, \X_t, \X_{ t - 1 }, \w_{ t - 1 } ] | \calF_{ t - 1 } ].
\end{align*}
For multinomial resampling, the law with respect to which the inner conditional expectation is taken is
\begin{equation*}
\bbP( \a_t = \a | \a_{ t + 1 }, \X_{ t + 1 }, \X_t, \X_{ t - 1 }, \w_{ t - 1 } ) \propto \prod_{ i = 1 }^N g_t( X_{ t + 1 }^{ ( a_{ t + 1 }^{ ( a_i ) } ) }, X_t^{ ( a_i ) } ) q_{ t - 1 }( X_t^{ ( a_i ) }, X_{ t - 1 }^{ ( i ) } ),
\end{equation*}
that is, the entries of $\a_t | \a_{ t + 1 }, \X_{ t + 1 }, \X_t, \X_{ t - 1 }, \w_{ t - 1 }$ are independent, with 
\begin{align*}
&a_t^{ ( i ) } | \a_{ t + 1 }, \X_{ t + 1 }, \X_t, \X_{ t - 1 }, \w_{ t - 1 } \\
&\sim \operatorname{Categorical}( g_t( X_{ t + 1 }^{ ( a_{ t + 1 }^{ ( 1 ) } ) }, X_t^{ ( 1 ) } ) q_{ t - 1 }( X_t^{ ( 1 ) }, X_{ t - 1 }^{ ( i ) } ), \ldots, g_t( X_{ t + 1 }^{ ( a_{ t + 1 }^{ ( N ) } ) }, X_t^{ ( N ) } ) q_{ t - 1 }( X_t^{ ( N ) }, X_{ t - 1 }^{ ( i ) } ) ),
\end{align*}
where in this and subsequent uses, the probabilities parametrising categorical distributions are given up to a normalising constant.

We call a function $f$ $I$-increasing if it is increasing in $\sum_{ i \in I } | \{ j \in [ N ]: a_t^{ ( j ) } = i \} |$ for $I \subseteq [ N ]$.
A balls-in-bins coupling shows that an $I$-increasing $\E[ f( \a_t ) | \a_{ t + 1 }, \X_{ t + 1 }, \X_t, \X_{ t - 1 }, \w_{ t - 1 } ] \leq \E[ f( \tilde{ \a }_t ) ]$, where $\tilde{ \a }$ is independent of $\calF_{ \infty }$, the entries of $\tilde{ \a }_t$ are independent of each other, and
\begin{equation*}
\tilde{ a }_t^{ ( j ) } \sim \operatorname{Categorical}\Big( \Big( \frac{ a }{ \varepsilon } \Big)^{ \mathds{ 1 }_{ \{ 1 \in I \} } - \mathds{ 1 }_{ \{ 1 \notin I \} } }, \ldots, \Big( \frac{ a }{ \varepsilon } \Big)^{ \mathds{ 1 }_{ \{ N \in I \} } - \mathds{ 1 }_{ \{ N \notin I \} } } \Big),
\end{equation*}
which follows from substituting upper bounds from \eqref{weight_bound} and \eqref{mixing_bound} for the probabilities corresponding to bins in $I$, corresponding lower bounds elsewhere, and canceling common factors.
Writing
\begin{equation*}
\E[ c_N( t ) | \calF_{ t - 1 } ] = \frac{ 1 }{ ( N )_2 } \sum_{ i = 1 }^N \E[ ( \nu_t^{ ( i ) } )_2 | \calF_{ t - 1 } ] =: \frac{ 1 }{ ( N )_2 } \sum_{ i = 1 }^N \E[ f_i( \a_t ) | \calF_{ t - 1 } ],
\end{equation*}
noting that $f_i$ is $\{ i \}$-increasing, and using the binomial moment formula, 
\begin{equation}\label{mosimann}
X \sim \operatorname{Bin}( N; p ) \Rightarrow \E[ ( X )_q ] = ( N )_q p^q, 
\end{equation}
which can be found, for example, in \cite{Mosimann62}, applied to $\nu_t^{ ( i ) }$, yields
\begin{equation*}
\E[ c_N( t ) | \calF_{ t - 1 } ] \leq \frac{ 1 }{ ( N )_2 } \sum_{ i = 1 }^N ( N )_2 \Big( \frac{ a / \varepsilon }{ ( N - 1 ) \varepsilon / a + a / \varepsilon } \Big)^2 \leq \frac{ a^4 }{ N \varepsilon^4 }.
\end{equation*}
Flipping the upper and lower bounds in the argument establishes \eqref{c_N_bound} via
\begin{equation}\label{c_N_lb}
\E[ c_N( t ) | \calF_{ t - 1 } ] \geq \frac{ \varepsilon^4 }{ N a^4 }.
\end{equation}

To verify \eqref{c_N_2_bound}, we write
\begin{align}
\E[ c_N( t )^2 | \calF_{ t - 1 } ] &= \frac{ 1 }{ [ ( N )_2 ]^2 } \E\Bigg[ \Bigg( \sum_{ i = 1 }^N ( \nu_t^{ ( i ) } )_2 \Bigg)^2 \Big| \calF_{ t - 1 } \Bigg] \nonumber \\
&= \frac{ 1 }{ [ ( N )_2 ]^2 } \Bigg( \sum_{ i = 1 }^N \E[ [ ( \nu_t^{ ( i ) } )_2 ]^2 | \calF_{ t - 1 } ] + \sum_{ i = 1 }^N \sum_{ j \neq i }^N \E[ ( \nu_t^{ ( i ) } )_2 ( \nu_t^{ ( j ) } )_2 | \calF_{ t - 1 } ] \Bigg) \nonumber \\
&\leq  \frac{ 1 }{ [ ( N )_2 ]^2 } \sum_{ i = 1 }^N \Bigg\{ \E[ ( \nu_t^{ ( i ) } )_4 + 4 ( \nu_t^{ ( i ) } )_3 + 2 ( \nu_t^{ ( i ) } )_2 | \calF_{ t - 1 } ] \nonumber \\
&\phantom{\leq  \frac{ 1 }{ [ ( N )_2 ]^2 } }+ \sum_{ j \neq i }^N \E[ ( \nu_t^{ ( i ) } + \nu_t^{ ( j ) } )_4 + 4 ( \nu_t^{ ( i ) } + \nu_t^{ ( j ) } )_3 + 2 ( \nu_t^{ ( i ) } + \nu_t^{ ( j ) } )_2 | \calF_{ t - 1 } ] \Bigg\}, \label{c_N_2_ub}
\end{align}
where the inequality uses $[ ( \nu )_2 ]^2 = ( \nu )_4 + 4 ( \nu )_3 + 2 ( \nu )_2$.
The first expectation on the right-hand side is $\{ i \}$-increasing, and so by \eqref{weight_bound}, \eqref{mixing_bound} and \eqref{mosimann},
\begin{equation}\label{i_inc}
\E[ ( \nu_t^{ ( i ) } )_4 + 4 ( \nu_t^{ ( i ) } )_3 + 2 ( \nu_t^{ ( i ) } )_2 | \calF_{ t - 1 } ] \leq \frac{ ( N )_2 a^4 }{ N^2 \varepsilon^4 } \Big( \frac{ a^4 }{ \varepsilon^4 } + \frac{ 4 a^2 }{ \varepsilon^2 } + 2 \Big).
\end{equation}
The second is $\{ i, j \}$-increasing, which gives
\begin{equation}\label{ij_inc}
\E[ ( \nu_t^{ ( i ) } + \nu_t^{ ( j ) } )_4 + 4 ( \nu_t^{ ( i ) } + \nu_t^{ ( j ) } )_3 + 2 ( \nu_t^{ ( i ) } + \nu_t^{ ( j ) } )_2 | \calF_{ t - 1 } ] \leq \frac{ 8 ( N )_2 a^4 }{ N^2 \varepsilon^2 } \Big( \frac{ 2 a^4 }{ \varepsilon^4 } + \frac{ 4 a^2 }{ \varepsilon^2 } + 1 \Big). 
\end{equation}
Substituting \eqref{i_inc} and \eqref{ij_inc} into \eqref{c_N_2_ub} yields
\begin{equation*}
\E[ c_N( t )^2 | \calF_{ t - 1 } ] \leq  \frac{ 16 a^4 }{ ( N )_2 \varepsilon^4 } \Big( \frac{ a^4 }{ \varepsilon^4 } + \frac{ 2 a^2 }{ \varepsilon^2 } + 1 \Big) \leq \frac{ C_2 }{ N } \E[ c_N( t ) | \calF_{ t - 1 } ],
\end{equation*}
where the last inequality follows from \eqref{c_N_lb}.

Finally, for \eqref{D_N_bound} we write
\begin{align*}
\E[ D_N( t ) | \calF_{ t - 1 } ] &= \frac{ 1 }{ N ( N )_2 } \sum_{ i = 1 }^N \Big\{ \E[ ( \nu_t^{ ( i ) } )_2 \nu_t^{ ( i ) } | \calF_{ t - 1 } ] + \frac{ 1 }{ N } \sum_{ j \neq i }^N \E[ ( \nu_t^{ ( i ) } )_2 ( \nu_t^{ ( j ) } )^2 | \calF_{ t - 1 } ] \Big\} \\
&\leq \frac{ 1 }{ N ( N )_2 } \sum_{ i = 1 }^N \Big\{ \E[ ( \nu_t^{ ( i ) } )_3 + 2 ( \nu_t^{ ( i ) } )_2 | \calF_{ t - 1 } ] + \frac{ 1 }{ N } \sum_{ j \neq i }^N \E[ ( \nu_t^{ ( i ) } + \nu_t^{ ( j ) } )_4 \\
&\phantom{\leq \frac{ 1 }{ N ( N )_2 } \sum_{ i = 1 }^N \Big\{} + 5 ( \nu_t^{ ( i ) } + \nu_t^{ ( j ) } )_3 + 4 ( \nu_t^{ ( i ) } + \nu_t^{ ( j ) } )_2 | \calF_{ t - 1 } ] \Big\},
\end{align*}
where the second line follows from $( \nu )_2 \nu^2 = ( \nu )_4 + 5 ( \nu )_3 + 4 ( \nu )_2$.
The expectations are $\{ i \}$- and $\{ i, j \}$-increasing, respectively, and so \eqref{mosimann} gives
\begin{align*}
\E[ ( \nu_t^{ ( i ) } )_3 + 2 ( \nu_t^{ ( i ) } )_2 | \calF_{ t - 1 } ] &\leq \frac{ ( N )_2 a^4 }{ N^2 \varepsilon^4 } \Big( \frac{ a^2 }{ \varepsilon^2 } + 2 \Big), \\
\E[ ( \nu_t^{ ( i ) } + \nu_t^{ ( j ) } )_4 + 5 ( \nu_t^{ ( i ) } + \nu_t^{ ( j ) } )_3 + 4 ( \nu_t^{ ( i ) } + \nu_t^{ ( j ) } )_2 | \calF_{ t - 1 } ] &\leq \frac{ 8 ( N )_2 a^4 }{ N^2 \varepsilon^4 } \Big( 2 + \frac{ 5 a^2 }{ \varepsilon^2 } + \frac{ 2 a^4 }{ \varepsilon^4 } \Big).
\end{align*}
Using \eqref{c_N_lb} then yields
\begin{align*}
\E[ D_N( t ) | \calF_{ t - 1 } ] &\leq \frac{ 1 }{ ( N )_2 } \Big\{ \frac{ ( N )_2 a^4 }{ N^2 \varepsilon^4 } \Big( \frac{ a^2 }{ \varepsilon^2 } + 2  \Big) + \frac{ 8 ( N )_2 a^4 }{ N^2 \varepsilon^4 } \Big( 2 + \frac{ 5 a^2 }{ \varepsilon^2 } + \frac{ 2 a^4 }{ \varepsilon^4 } \Big) \Big\} \\
&\leq \frac{ C_1 }{ N } \E[ c_N( t ) | \calF_{ t - 1 } ].
\end{align*}
\end{proof}

\begin{cor}\label{multinomial_thm}
Genealogies of $n$ particles from SMC algorithms with multinomial resampling  converge to the Kingman $n$-coalescent in the sense of finite-dimensional distributions under the time-scaling introduced in Theorem \ref{general_thm} under the conditions of Lemma \ref{conditional_lemma}.
\end{cor}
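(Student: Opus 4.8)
The plan is to show that SMC with multinomial resampling, under the boundedness conditions \eqref{weight_bound}--\eqref{mixing_bound}, satisfies the four hypotheses \eqref{big_merger_bound}--\eqref{tau_bound} of Theorem \ref{general_thm}, after which the conclusion is immediate. First I would observe that multinomial resampling is exchangeable, so the standing assumption holds and Theorem \ref{general_thm} applies once \eqref{big_merger_bound}--\eqref{tau_bound} are verified. Condition \eqref{binary_bound} is the easiest: taking expectations in the upper bound of \eqref{c_N_bound} gives $\E[c_N(t)] = \E[\E[c_N(t)\mid\calF_{t-1}]] \leq a^4/(N\varepsilon^4) \to 0$.

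Next I would establish \eqref{tau_bound} by sandwiching $\E[\sum_{r=\tau_N(s)+1}^{\tau_N(t)} c_N(r)]$. From above, the definition of $\tau_N$ together with $c_N(r)\leq 1$ yields the deterministic bound $\sum_{r=\tau_N(s)+1}^{\tau_N(t)} c_N(r) = \sum_{r=1}^{\tau_N(t)} c_N(r) - \sum_{r=1}^{\tau_N(s)} c_N(r) \leq t - s + 1$. From below, Lemma \ref{optional_stopping_lemma} lets me replace each $c_N(r)$ in the randomly terminated sum by $\E[c_N(r)\mid\calF_{r-1}]$, which by the lower bound in \eqref{c_N_bound} is at least $\varepsilon^4/(Na^4)$, so $\E[\sum_{r=\tau_N(s)+1}^{\tau_N(t)} c_N(r)] \geq (\varepsilon^4/(Na^4))\,\E[\tau_N(t)-\tau_N(s)]$. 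Rearranging gives \eqref{tau_bound} with $C_{t,s}$ proportional to $(t-s+1)(a/\varepsilon)^4$.

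Conditions \eqref{binary_bound_2} and \eqref{big_merger_bound} then follow by the same mechanism. I would apply Lemma \ref{optional_stopping_lemma} (which, by the remark following it, applies verbatim to $c_N(r)^2$ and to $D_N(r)$) to rewrite the randomly terminated sum in terms of conditional expectations, bound those by $(C_2/N)\,\E[c_N(r)\mid\calF_{r-1}]$ via \eqref{c_N_2_bound}, respectively by $(C_1/N)\,\E[c_N(r)\mid\calF_{r-1}]$ via \eqref{D_N_bound}, and finally invoke the bound $\E[\sum_{r=\tau_N(s)+1}^{\tau_N(t)} c_N(r)] \leq t-s+1$ just obtained. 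Both expectations are then $O(1/N)$ and vanish as $N\to\infty$, completing the verification.

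I do not expect a serious obstacle here: the heavy lifting has been front-loaded into Lemma \ref{conditional_lemma} (whose proof rests on the balls-in-bins stochastic domination and the falling-factorial moment identity \eqref{mosimann}) and into Lemma \ref{optional_stopping_lemma}, so what remains is essentially bookkeeping. The one point requiring care is that the sums in \eqref{big_merger_bound} and \eqref{binary_bound_2} are terminated at the random time $\tau_N(t)$, which is precisely why the optional-stopping identity of Lemma \ref{optional_stopping_lemma} is needed in place of a naive application of the tower property.
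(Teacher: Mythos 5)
Your proposal is correct and follows essentially the same route as the paper: exchangeability gives the standing assumption, \eqref{c_N_bound} gives \eqref{binary_bound}, Lemma \ref{optional_stopping_lemma} combined with \eqref{c_N_2_bound} and \eqref{D_N_bound} plus the deterministic bound $\sum_{r=\tau_N(s)+1}^{\tau_N(t)}c_N(r)\leq t-s+1$ gives \eqref{binary_bound_2} and \eqref{big_merger_bound}, and the lower bound in \eqref{c_N_bound} together with Lemma \ref{optional_stopping_lemma} gives \eqref{tau_bound}. Your ``sandwich'' phrasing of the last step is just a rearrangement of the paper's telescoping-ratio argument, so there is nothing substantive to distinguish the two.
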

\begin{proof}
The standing assumption holds by exchangeability of multinomial resampling.
Condition \eqref{binary_bound} is immediate by taking expectations in \eqref{c_N_bound}.

To verify \eqref{binary_bound_2}, we use Lemma \ref{optional_stopping_lemma} twice with \eqref{c_N_2_bound} in between to obtain
\begin{align*}
\E\Bigg[ \sum_{ r = \tau_N( s ) + 1 }^{ \tau_N( t ) } c_N( r )^2 \Bigg] &= \E\Bigg[ \sum_{ r = \tau_N( s ) + 1 }^{ \tau_N( t ) } \E[ c_N( r )^2 | \calF_{ r - 1 } ] \Bigg] \leq \frac{ C_2 }{ N } \E\Bigg[ \sum_{ r = \tau_N( s ) + 1 }^{ \tau_N( t ) } c_N( r ) \Bigg] \leq \frac{ C_2 ( t - s + 1 ) }{ N }.
\end{align*}

Condition \eqref{big_merger_bound} can be checked using \eqref{D_N_bound} via the same argument as for \eqref{binary_bound_2}:
\begin{align*}
\E\Bigg[ \sum_{ r = \tau_N( s ) + 1 }^{ \tau_N( t ) } D_N( r ) \Bigg] \leq \frac{ C_1 }{ N } \E\Bigg[ \sum_{ r = \tau_N( s ) + 1 }^{ \tau_N( t ) } c_N( r ) \Bigg] \leq \frac{ C_1 ( t - s + 1 ) }{ N }.
\end{align*}

Finally, for \eqref{tau_bound} we use \eqref{c_N_bound} and Lemma \ref{optional_stopping_lemma} to obtain
\begin{align*}
\E[ \tau_N( t ) - \tau_N( s ) ] = \E\Bigg[ \sum_{ r = \tau_N( s ) + 1 }^{ \tau_N( t ) } \frac{ \E[ c_N( r ) | \calF_{ r - 1 } ] }{ \E[ c_N( r ) | \calF_{ r - 1 } ] } \Bigg] \leq \frac{ a^4 }{ \varepsilon^4 } N ( t - s + 1 ).
\end{align*}
\end{proof}

We now demonstrate that while the mode of convergence in Theorem \ref{general_thm} is too weak for convergence of expectations of continuous, bounded test functions, useful information can still be obtained.
For example, the time until the Kingman $n$-coalescent reaches its MRCA can be constructed as $T_n := \sum_{ k = 2 }^n S_k$, where the $( S_2, \ldots S_n )$ are independent, and $S_k \sim \operatorname{Exp}( \binom{ k }{ 2 } )$.
Moments of $\tau_N( T_n )$ give the time scale on which a SMC algorithm will reach its MRCA.
\vskip 11pt
\begin{cor}\label{time_scale_cor}
Under the assumptions of Corollary \ref{multinomial_thm}, the following  bounds hold for any $1 \leq n \leq N$, and any coupling of $( \tau_N, T_n )$:
\begin{align*}
\frac{ 2 \varepsilon^4 N }{ a^4 } ( 1 - n^{ -1 } ) &\leq \E[ \tau_N( T_n ) ] \leq \frac{ 2 a^4 N }{ \varepsilon^4 } ( 1 - n^{ -1 } ) + \frac{ a^8 }{ \varepsilon^4 }, \\
\Var( \tau_N( T_n ) ) &\leq \frac{ N^2 a^8 }{ \varepsilon^8 } \Big( \frac{ 4 \pi^2 }{ 3 } - 12 + O( n^{ -1 } ) \Big) + O( N ), \\
\Var( \tau_N( T_n ) ) &\geq \frac{ N^2 \varepsilon^8 }{ a^8 } \Big( \frac{ 4 \pi^2 }{ 3 } - 12 + O( n^{ -1 } )\Big),
\end{align*}
\end{cor}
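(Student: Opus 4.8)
The plan is to reduce everything to deterministic thresholds: since $T_n$ is adjoined to the family-size process only through the coupling, and the quantities below depend on the joint law only through $\E[T_n]$ and $\Var(T_n)$, it suffices to treat $T_n$ as independent of the particle system, to control $\E[\tau_N(t)]$ and $\E[\tau_N(t)^2]$ for deterministic $t\ge0$, and then to average over $T_n$. The moments of $T_n$ are elementary: $\E[T_n]=\sum_{k=2}^n\binom{k}{2}^{-1}=2\sum_{k=2}^n\bigl(\tfrac{1}{k-1}-\tfrac1k\bigr)=2(1-n^{-1})$, and, writing $\binom{k}{2}^{-2}=\tfrac{4}{k^2(k-1)^2}=\tfrac{4}{(k-1)^2}+\tfrac{4}{k^2}-\tfrac{8}{k(k-1)}$ and using $\sum_{k\ge1}k^{-2}=\pi^2/6$, one obtains $\Var(T_n)=\sum_{k=2}^n\binom{k}{2}^{-2}=\tfrac{4\pi^2}{3}-12+O(n^{-1})$.

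For the mean, the definition of $\tau_N$ and $c_N(\cdot)\le1$ give the pathwise sandwich $t\le\sum_{r=1}^{\tau_N(t)}c_N(r)=\sum_{r=1}^{\tau_N(t)-1}c_N(r)+c_N(\tau_N(t))<t+c_N(\tau_N(t))$. Taking expectations, rewriting $\E\bigl[\sum_{r=1}^{\tau_N(t)}c_N(r)\bigr]=\E\bigl[\sum_{r=1}^{\tau_N(t)}\E[c_N(r)\mid\calF_{r-1}]\bigr]$ via Lemma \ref{optional_stopping_lemma}, and inserting the bounds $\tfrac{\varepsilon^4}{Na^4}\le\E[c_N(r)\mid\calF_{r-1}]\le\tfrac{a^4}{N\varepsilon^4}$ from \eqref{c_N_bound} yields $\tfrac{N\varepsilon^4}{a^4}\,t\le\E[\tau_N(t)]\le\tfrac{Na^4}{\varepsilon^4}\bigl(t+\E[c_N(\tau_N(t))]\bigr)$. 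The overshoot $\E[c_N(\tau_N(t))]$ is handled by the stopping-time decomposition $\{\tau_N(t)=m\}\subseteq\{\tau_N(t)\ge m\}\in\calF_{m-1}$, together with the fact that under multinomial resampling $\E[c_N(m)\mid\w_m]=\sum_i(w_m^{(i)})^2\le a^4/N$ by \eqref{mosimann} and \eqref{weight_bound}; it contributes only a lower-order additive term. Substituting $t=T_n$, taking expectations, and inserting $\E[T_n]=2(1-n^{-1})$ gives the stated bounds on $\E[\tau_N(T_n)]$.

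For the variance I would use the law of total variance, $\Var(\tau_N(T_n))=\E[\Var(\tau_N(T_n)\mid T_n)]+\Var(\E[\tau_N(T_n)\mid T_n])$, with $\E[\tau_N(T_n)\mid T_n=t]=m_N(t):=\E[\tau_N(t)]$. Applying the mean analysis to increments $\E[\tau_N(t)-\tau_N(s)]$ shows $m_N$ is nondecreasing with increments trapped (up to the same overshoot corrections) between $\tfrac{N\varepsilon^4}{a^4}(t-s)$ and $\tfrac{Na^4}{\varepsilon^4}(t-s)$; the identity $\Var(m_N(T_n))=\tfrac12\E[(m_N(T_n)-m_N(T_n'))^2]$ for i.i.d.\ copies then sandwiches the second term between $\tfrac{N^2\varepsilon^8}{a^8}\Var(T_n)$ and $\tfrac{N^2a^8}{\varepsilon^8}\Var(T_n)$, up to $O(N)$. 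The first term needs a second-moment estimate for $\tau_N(t)$: with $A_s:=\sum_{r\le s}\E[c_N(r)\mid\calF_{r-1}]$ and the martingale $M_s:=\sum_{r\le s}c_N(r)-A_s$, whose predictable quadratic variation obeys $\langle M\rangle_{\tau_N(t)}\le\tfrac{C_2}{N}A_{\tau_N(t)}$ by \eqref{c_N_2_bound}, optional stopping gives $\E[M_{\tau_N(t)}^2]=\E[\langle M\rangle_{\tau_N(t)}]=O(t/N)$; combined with $\tfrac{N\varepsilon^4}{a^4}A_{\tau_N(t)}\le\tau_N(t)\le\tfrac{Na^4}{\varepsilon^4}A_{\tau_N(t)}$ and $A_{\tau_N(t)}=S_{\tau_N(t)}-M_{\tau_N(t)}$ with $t\le S_{\tau_N(t)}<t+1$ this controls $\E[\tau_N(t)^2]$, the residual then feeding the advertised $O(N)$ term after averaging over $T_n$.

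The main obstacle I anticipate is the variance \emph{upper} bound, specifically ensuring that the first term of the decomposition — equivalently, the spread of $\tau_N(t)$ across realisations of the particle system at a fixed $t$ — is genuinely $O(N)$ rather than $O(N^2)$. The conditional-moment bounds \eqref{c_N_bound}--\eqref{c_N_2_bound} alone control the martingale part $M$ but not the fluctuations of the compensator $A_{\tau_N(t)}$ around a deterministic curve, and with those bounds alone the running sums $A_s$ could in principle straddle the full interval $[\tfrac{\varepsilon^4s}{Na^4},\tfrac{a^4s}{N\varepsilon^4}]$. Here one must exploit the Doeblin-type mixing \eqref{mixing_bound}, which makes the weight (hence $c_N$) process exponentially ergodic so that $\E[c_N(r)\mid\calF_{r-1}]$ concentrates and $\tau_N(t)$ is pinned to $\E[\tau_N(t)]$ up to $o(N)$. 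Collapsing these mixing-induced corrections into a single $O(N)$ term, while keeping the leading $N^2$-coefficient exactly $\tfrac{a^8}{\varepsilon^8}\Var(T_n)$, is the delicate step; the lower bounds, by contrast, follow directly from Jensen's inequality and the one-sided estimates above.
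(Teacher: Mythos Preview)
Your treatment of the mean, and of the variance lower bound via the law of total variance with $\Var(\tau_N(T_n))\ge\Var(\E[\tau_N(T_n)\mid T_n])$, matches the paper almost verbatim. The divergence is in the variance upper bound for fixed $t$.

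You anticipate that controlling $\Var(\tau_N(t))$ to order $O(N)$ requires the Doeblin condition \eqref{mixing_bound} to force concentration of the compensator $A_{\tau_N(t)}$. The paper does not invoke mixing here at all. Instead it argues directly
\[
\Var(\tau_N(t))\;\le\;\Var\Bigl(\tfrac{Na^4}{\varepsilon^4}\textstyle\sum_{s=1}^{\tau_N(t)}\E[c_N(s)\mid\calF_{s-1}]\Bigr)
\;=\;\tfrac{N^2a^8}{\varepsilon^8}\,\Var\Bigl(\textstyle\sum_{s=1}^{\tau_N(t)}c_N(s)\Bigr),
\]
using the same pointwise sandwich as in the mean argument for the first step and (a second-moment use of) Lemma~\ref{optional_stopping_lemma} for the second. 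Since $S_{\tau_N(t)}=\sum_{s\le\tau_N(t)}c_N(s)\in[t,\,t+c_N(\tau_N(t)))$ pathwise and $\E[c_N(\tau_N(t))^2]=O(N^{-2})$, the right-hand side is $O(Nt)$, and the law of total variance then yields the stated $\tfrac{N^2a^8}{\varepsilon^8}\Var(T_n)+O(N)$. In other words, the paper sidesteps your anticipated obstacle by passing to $S_{\tau_N(t)}$, which is nearly deterministic by the very definition of $\tau_N$, rather than controlling $A_{\tau_N(t)}$ through ergodicity of the weight process. Your martingale route would also work, but the mixing argument you sketch is more elaborate than what the paper actually does. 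That said, your instinct is not baseless: the variance-monotonicity step and the $A\leftrightarrow S$ identification at the level of second moments are quite compressed in the paper, and your explicit decomposition $A_{\tau_N(t)}=S_{\tau_N(t)}-M_{\tau_N(t)}$ with $\E[M_{\tau_N(t)}^2]=\E[\langle M\rangle_{\tau_N(t)}]=O(t/N)$ is precisely what one would use to make that passage rigorous---still without any appeal to mixing.
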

\begin{proof}
By \eqref{c_N_bound}, Lemma \ref{optional_stopping_lemma}, the definition of $\tau_N$, and the fact that
\begin{equation*}
\E[ c_N( t ) ] = \E[ \E[ c_N( t ) | \w_t ] ] = \sum_{ i = 1 }^N \E[ ( w_t^{ ( i ) } )^2 ] \leq \frac{ a^4 }{ N },
\end{equation*} 
we have
\begin{align*}
\E[ \tau_N( t ) ] &= \E\Bigg[ \sum_{ s = 1 }^{ \tau_N( t ) } \frac{ \E[ c_N( s ) | \calF_{ s - 1 } ] }{ \E[ c_N( s ) | \calF_{ s - 1 } ] } \Bigg] \leq N \frac{ a^4 }{ \varepsilon^4 } \E\Bigg[ \sum_{ s = 1 }^{ \tau_N( t ) } \E[ c_N( s ) | \calF_{ s - 1 } ] \Bigg] \\
&= N \frac{ a^4 }{ \varepsilon^4 } \E\Bigg[ \sum_{ s = 1 }^{ \tau_N( t ) } c_N( s ) \Bigg] \leq N \frac{ a^4 }{ \varepsilon^4 } \{ t + \E[ c_N( \tau_N( t ) ) ] \} \leq N \frac{ a^4 }{ \varepsilon^4 } \Big( t + \frac{ a^4 }{ N } \Big).
\end{align*}

A corresponding lower bound of $N t \varepsilon^4 / a^4$ is obtained via the same argument.
Conditioning on $T_n$ in $\E[ \tau_N( T_n ) ]$, and using $\E[ T_n ] = 2 ( 1 - n^{ -1 } )$ \cite[page 76]{wakeley09} establishes the claimed bounds on the mean.

A similar argument for the variance gives
\begin{align*}
\Var( \tau_N( t )  ) &= \Var\Bigg( \sum_{ s = 1 }^{ \tau_N( t ) } \frac{ \E[ c_N( s ) | \calF_{ s - 1 } ] }{ \E[ c_N( s ) | \calF_{ s - 1 } ] } \Bigg) \leq \Var\Bigg( \frac{ N a^4 }{ \varepsilon^4 } \sum_{ s = 1 }^{ \tau_N( t ) } \E[ c_N( s ) | \calF_{ s - 1 } ] \Bigg) \\
&= \frac{ N^2 a^8 }{ \varepsilon^8 } \Bigg( \E\Bigg[ \sum_{ s_1, s_2 = 1 }^{ \tau_N( t ) } c_N( s_1 ) c_N( s_2 ) \Bigg] - \E\Bigg[ \sum_{ s = 1 }^{ \tau_N( t ) } c_N( s ) \Bigg]^2 \Bigg) \\
&\leq \frac{ N^2 a^8 }{ \varepsilon^8 } \Big( \E[ \{ t + c_N( \tau_N( t ) ) \}^2 ] - t^2 \Big) \leq \frac{ 2 N a^{ 12 } }{ \varepsilon^8 } t + O( 1 ),
\end{align*}
because taking expectations in \eqref{c_N_2_bound} shows that $\E[ c_N( t )^2 ] = O( N^{ -2 } )$.
Thus, by the law of total variance,
\begin{align*}
\Var( \tau_N( T_n ) ) &= \Var( \E[ \tau_N( T_n ) | T_n ] ) + \E[ \Var( \tau_N( T_n ) | T_n ) ] \\
&\leq \frac{ N^2 a^8 }{ \varepsilon^8 } \Var( T_n ) + \frac{ 2 N a^{ 12 } }{ \varepsilon^8 } \E[ T_n ] + O( 1 ) \\
&= \frac{ N^2 a^8 }{ \varepsilon^8 } \Big( \frac{ 4 \pi^2 }{ 3 } - 12 + O( n^{ -1 } ) \Big) + O( N ),
\end{align*}
since $\Var( T_n ) = 4 \pi^2 / 3 - 12 + O( n^{ -1 } )$ \cite[page 76]{wakeley09}.
The other direction is much simpler since variance is nonnegative:
\begin{align*}
\Var( \tau_N( T_n ) ) &\geq \Var( \E[ \tau_N( T_n ) | T_n ] ) \geq \frac{ N^2 \varepsilon^8 }{ a^8 } \Var( T_n ) \\
&= \frac{ N^2 \varepsilon^8 }{ a^8 } \Big( \frac{ 4 \pi^2 }{ 3 } - 12 + O( n^{ -1 } ) \Big).
\end{align*}
\end{proof}
Simulations in the next section confirm that the scalings predicted by Corollary \ref{time_scale_cor} hold for real algorithms.
Strengthening the mode of convergence in Theorem \ref{general_thm} to obtain a wider class of bounds is a subject of ongoing work.

\section{A numerical example}\label{numerics}

In this section we study the robustness of Theorem \ref{general_thm} by demonstrating via simulation that the scalings of Corollary \ref{time_scale_cor} hold nonasymptotically for a particle system for which \eqref{weight_bound} and \eqref{mixing_bound} fail.
We also show that the same scalings hold for popular alternatives to multinomial resampling which do not satisfy the standing assumption.

Let $( X_t, Y_t )_{ t \geq 0 }$ be the discretised Ornstein--Uhlenbeck process:
\begin{align*}
X_{ t + 1 } &= ( 1 - \Delta ) X_t + \sqrt{ \Delta } \xi_t, \\
 X_0 &\sim N( 0, 1 ), \\
 Y_t | X_t &\sim N( X_t, \sigma^2 ),
\end{align*}
where $\Delta > 0$ is the step size, $\sigma^2$ is the observation noise, and $\bm{ \xi } \stackrel{\text{iid}}{\sim} N( 0, 1 )$.

We observe a realisation of the trajectory $\bm{ Y } = \bm{ y }$, but not $\bm{ X }$, and specify a bootstrap particle filter targeting the smoothing distribution $P( \bm{ x } | \bm{ y } )$ via
\begin{align*}
q_t( x, x' ) \equiv p( x, x' ) &:= ( 2 \pi \Delta )^{ -1 / 2 } \exp\Big( \frac{ - ( x' - ( 1 - \Delta ) x )^2 }{ 2 \Delta } \Big), \\
g_t( x, x' ) \equiv \psi( x', y_t ) &:= ( 2 \pi \sigma^2 )^{ -1 / 2 } \exp\Big( \frac{ - ( y_t - x' )^2 }{ 2 \sigma^2 } \Big), \\
p( x_{ -1 }, x ) \equiv \mu( x ) &:= ( 2 \pi )^{ -1 / 2 } \exp\Big( \frac{ - x^2 }{ 2 } \Big),
\end{align*}
in Algorithm \ref{particle_filter}.
We set $\Delta = \sigma = 0.1$, simulated an observed trajectory of length $T = 40960$, and used it as the input for bootstrap particle filters with $N = 8192$, and recorded subtree heights for uniformly sampled subsets of leaves of size $n \in \{ 2, 4, 8, \ldots, N \}$.
The mean and variance of tree heights were then estimated from 1000 replicates for each of multinomial, residual, stratified and systematic resampling, with all four simulations run using the same observed data and the same random seed. 
Representative results are shown in Figure \ref{mean_T_biD_N}; results for filters as small as $N = 128$ were similar.

All depicted moment estimators remain bounded away from 0 and $\infty$ as predicted by Corollary \ref{time_scale_cor}, including in the $n \approx N$ regime to which Theorem \ref{general_thm} does not apply.
As such, our simulations suggests that the $O( N \log N )$ bound of \cite{Jacob15} on the height of the genealogy of all $N$ particles could be sharpened to an $O( N )$ bound.
\begin{figure}[t]
\centering
\includegraphics[width = 0.48 \linewidth]{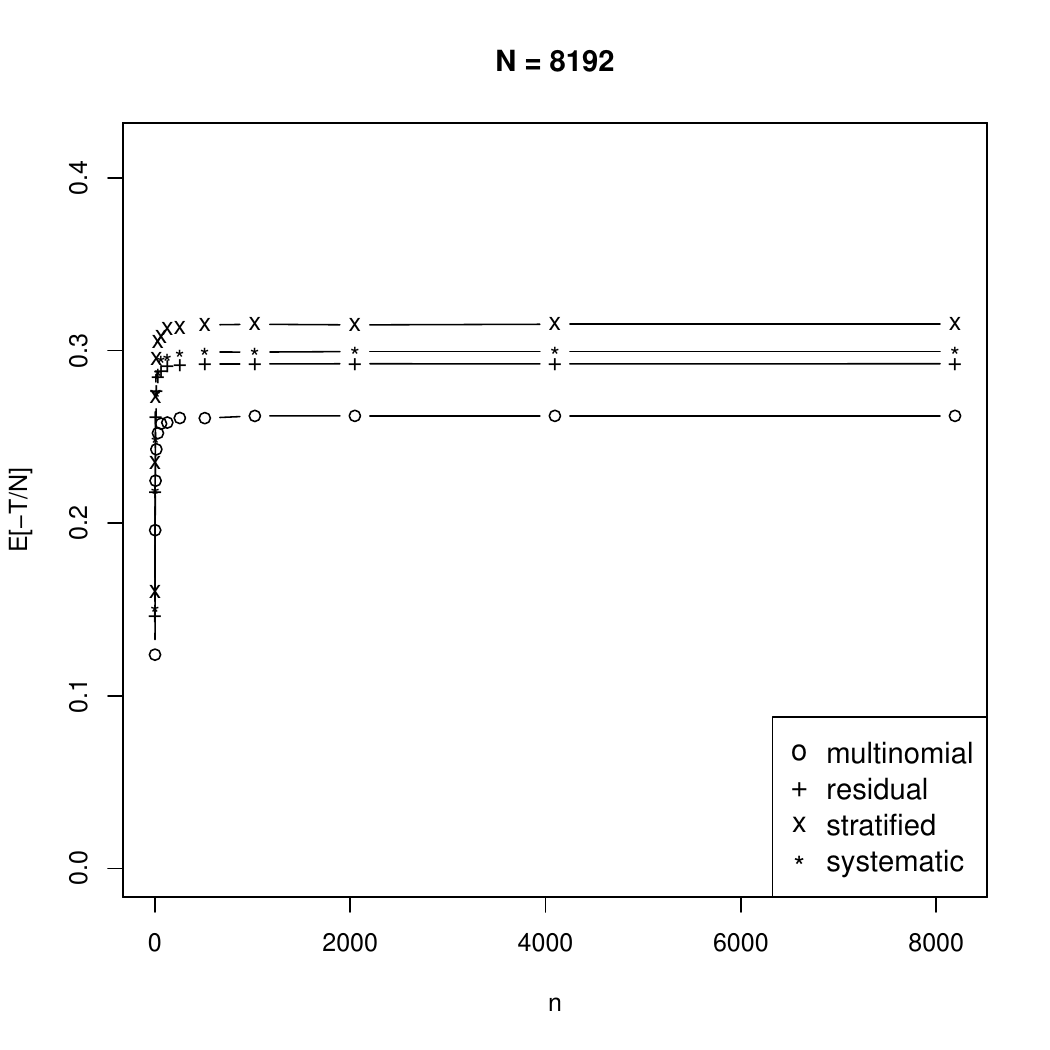}
\includegraphics[width = 0.48 \linewidth]{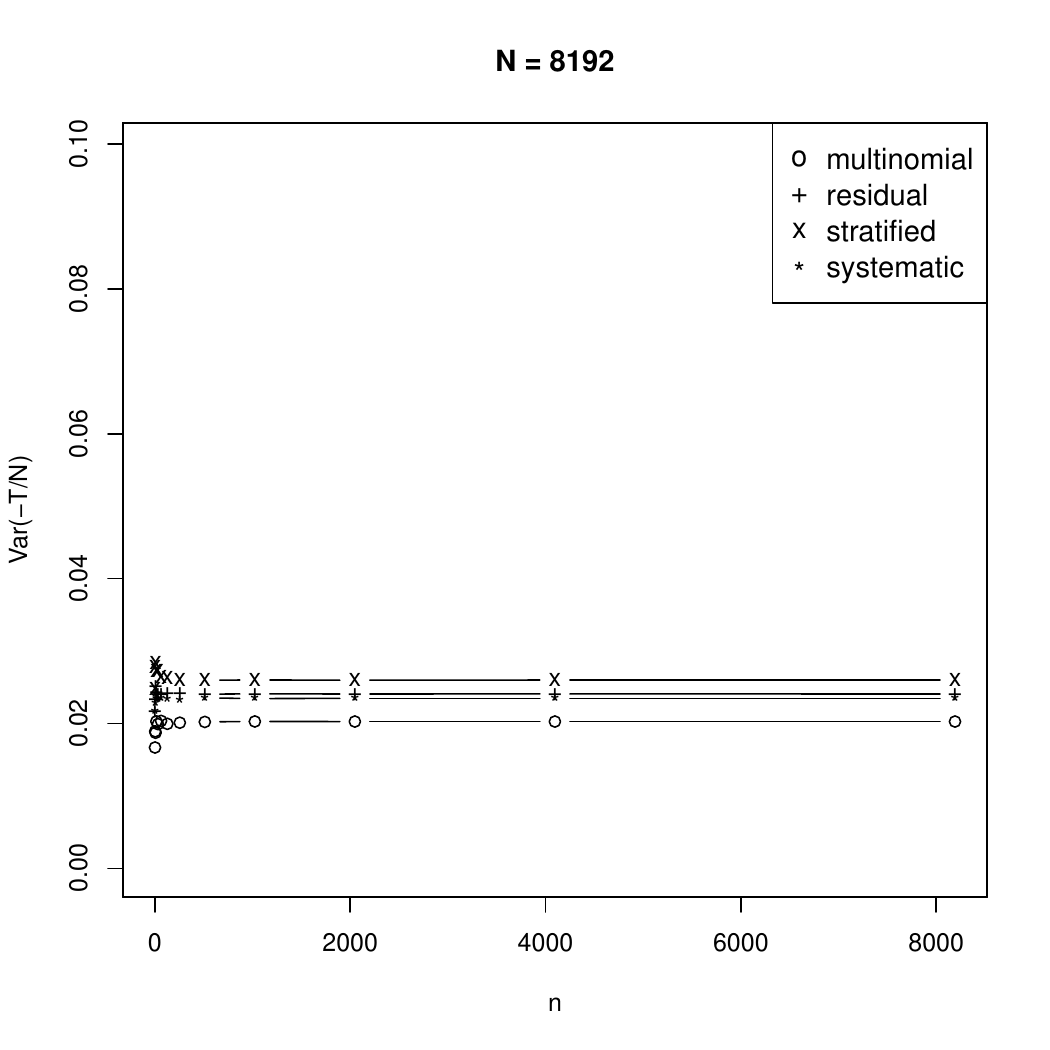}
\caption{Mean (left) and variance (right) of tree heights for $n \leq N$.}
\label{mean_T_biD_N}
\end{figure}

\section{Discussion}\label{discussion}

We have shown in Theorem \ref{general_thm} that genealogies of SMC algorithms converge, in the sense of finite-dimensional distributions, to the Kingman $n$-coalescent under a suitable rescaling of time, and certain assumptions.
Thus, we are able to use the tractability of the Kingman $n$-coalescent to characterise SMC algorithms as well.
We illustrated this in Corollary \ref{time_scale_cor} by obtaining scaling results for the first two moments of the number of generations from the leaves to the MRCA.
Asymptotic expressions for other quantities, for example, the probability of retaining at least two branches for a time window of a desired length can also be easily obtained.
Our method is the only tool for \emph{a priori} estimation of path degeneracy of which we are aware, and represents an important step towards practical guidelines for simultaneous minimisation of path and weight degeneracies.
Strengthening the mode of convergence in Theorem \ref{general_thm}, along the lines of \cite[Theorem 3.1]{moehle99}, would be desirable to enable the analysis of larger classes of functionals, and is the subject of ongoing work.

Our assumptions essentially require a compact state space and multinomial resampling.
However, the simulation study in Section \ref{numerics} suggests that neither is necessary: our example did not satisfy \eqref{weight_bound} or \eqref{mixing_bound}, and a range of resampling methods fit the predictions of Corollary \ref{time_scale_cor}.
We believe that our assumptions can be relaxed if subtler arguments are used to control the variability of family sizes.
Our simulations also confirmed that the predicted scalings hold nonasymptotically, and hence are relevant for real algorithms. 

Theorem \ref{general_thm} also demonstrates that the domain of attraction of the Kingman $n$-coalescent includes non-Markovian genealogies.
Previous results have focused on Markovian genealogies, which are typical models for neutral genetic evolution \cite[Theorem 1]{Moehle98}.
Thus, our results improve the tractability of models with random but inherited fecundity.

\section*{Appendix}

Here we verify that the Fubini and dominated convergence theorems apply to \eqref{binary_ub_2} and \eqref{pre-fubini_lb}, and prove Lemma \ref{optional_stopping_lemma}.
Taking the modulus of summands on the right-hand side of \eqref{binary_ub_2}, and using $D_N( t) \leq c_N( t )$ and \eqref{time_ub} yields
\begin{align*}
\lim_{ N \rightarrow \infty } \E\Bigg[ \prod_{ d = 1 }^k \chi_d \Bigg] \leq {}& \lim_{ N \rightarrow \infty } \sum_{ \beta_1 = 0 }^{ \infty } \ldots \sum_{ \beta_k = 0 }^{ \infty } ( 1 + O( N^{ -1 } ) )^{ | \bm{ \alpha } | + | \bm{ \beta } | } \\
&\times \prod_{ d = 1 }^k ( | Q |^{ \alpha_d + \beta_d } )_{ \eta_{ d - 1 } \eta_d } \frac{ \big\{ [ \binom{ n - 1 }{ 2 } + 1 ] ( t_d - t_{ d - 1 } + 1 ) \big\}^{ \alpha_d + \beta_d } }{ ( \alpha_d + \beta_d ) ! } \\
= {}& \prod_{ d = 1 }^k ( e^{ | Q | \big[ \binom{ n - 1 }{ 2 } + 1 \big] ( t_d - t_{ d - 1 } + 1 ) } )_{ \eta_{ d - 1 } \eta_d } < \infty,
\end{align*}
where $| Q |$ is the matrix whose entries are the absolute values of those of $Q$, and the last equality holds because $( | Q |^j )_{ \eta_{ d - 1 } \eta_d } = 0$ for $j < \alpha_d$.

Similarly, for \eqref{pre-fubini_lb}, using $D_N( t ) \leq c_N( t )$, \eqref{time_ub} and \eqref{path_counting} gives
\begin{align*}
\lim_{ N \rightarrow \infty } \E\Bigg[ \prod_{ d = 1 }^k \chi_d \Bigg] \leq {}& \lim_{ N \rightarrow \infty } \sum_{ \beta_1 = 0 }^{ \infty } \ldots \sum_{ \beta_k = 0 }^{ \infty } \binom{ n - 2 }{ 2 }^{ | \bm{ \alpha } | } \frac{ ( 1 + O( N^{ -1 } ) )^{ | \bm{ \alpha } | + | \bm{ \beta } | } }{ ( 1 + O( N^{ -2 } ) )^{ | \bm{ \alpha } | + | \bm{ \beta } | } } \nonumber \\
&\times \prod_{ d = 1 }^k ( | Q |^{ \alpha_d + \beta_d } )_{ \eta_{ d - 1 } \eta_d } \frac{ ( t_d - t_{ d - 1 } + 1 )^{ \alpha_d + \beta_d } }{ ( \alpha_d + \beta_d ) ! } \\
= {}& \binom{ n - 2 }{ 2 }^{ | \bm{ \alpha } | } \prod_{ d = 1 }^k ( e^{ | Q | ( t_d - t_{ d - 1 } + 1 ) } )_{ \eta_{ d - 1 } \eta_d } < \infty.
\end{align*}

\begin{proof}[Proof of Lemma \ref{optional_stopping_lemma}]
Define $M_{ s, t } := \sum_{ r = s + 1 }^t c_N( r ) - \E[ c_N( r ) | \calF_{ r - 1 } ]$, and for fixed $K > 0$ note that $\tau_N( t ) \wedge K$ is a bounded $\calF_t$-stopping time.
We have
\begin{align*}
\E[ M_{ s, \tau_N( t ) \wedge K } ] &= \sum_{ r = s + 1 }^K\E[ ( c_N( r ) - \E[ c_N( r ) | \calF_{ r - 1 } ] ) \mathds{ 1 }_{ \{ \tau_N( t ) \wedge K \geq r \} } ] \\
&= \sum_{ r = s + 1 }^K\E[ \mathds{ 1 }_{ \{ \tau_N( t ) \wedge K > r - 1 \} } \E[ ( c_N( r ) - \E[ c_N( r ) | \calF_{ r - 1 } ] ) | \calF_{ r - 1 } ] ] = 0,
\end{align*}
where the second line holds because $\mathds{ 1 }_{ \{ \tau_N( t ) \wedge K \geq r \} } = \mathds{ 1 }_{ \{ \tau_N( t ) \wedge K > r - 1 \} }$ is $\calF_{ r - 1 }$-measurable.
Conditioning on $\tau_N( s )$ and using $\tau_N( s ) \leq \tau_N( t )$ yields
\begin{equation*}
\E\Bigg[ \sum_{ r = \tau_N( s ) + 1 }^{ \tau_N( t ) \wedge K } c_N( r ) \Bigg] = \E\Bigg[ \sum_{ r = \tau_N( s ) + 1 }^{ \tau_N( t ) \wedge K } \E[ c_N( r ) | \calF_{ r - 1 } ] \Bigg],
\end{equation*}
and the monotone convergence theorem concludes the proof.
\end{proof}

\begin{lem}\label{indicator_lemma}
With notation as in Theorem \ref{general_thm}, and assuming the standing assumption as well as \eqref{big_merger_bound} and \eqref{binary_bound_2},
\begin{align*}
\lim_{ N \to \infty }\E\Bigg[ &\Bigg\{ \prod_{ d = 1 }^k \mathds{ 1 }_{ \{ \tau_N( t_d ) - \tau_N( t_{ d - 1 } ) \geq \alpha_d + \beta_d \} } \Bigg\}  \Bigg\{ \prod_{ r = \tau_N( t_0 ) + 1 }^{ \tau_N( t_d ) } \mathds{1}_{ \{ c_N( r ) < \binom{ n }{ 2 }^{ -1 }  \} } \mathds{ 1 }_{ \{ c_N( r ) > \binom{ n - 2 }{ 2 } D_N( r ) \} } \Bigg\} \Bigg] = 1.
\end{align*}
\end{lem}
\begin{proof}
By De Morgan's law, the union bound, and the Markov inequality,
\begin{align*}
&\E\Bigg[\Bigg\{ \prod_{ d = 1 }^k \mathds{ 1 }_{ \{ \tau_N( t_d ) - \tau_N( t_{ d - 1 } ) \geq \alpha_d + \beta_d \} } \Bigg\} \times \Bigg\{ \prod_{ r = \tau_N( t_0 ) + 1 }^{ \tau_N( t_d ) } \mathds{1}_{ \{ c_N( r ) < \binom{ n }{ 2 }^{ -1 }  \} } \mathds{ 1 }_{ \{ c_N( r ) > \binom{ n - 2 }{ 2 } D_N( r ) \} } \Bigg\} \Bigg] \\
&\geq 1 - \sum_{ d = 1 }^k \frac{ 1 }{ t_d - t_{ d - 1 } } \sum_{ r = 1 }^{ \alpha_d } \E[ c_N( \tau_N( t_{ d - 1 } + r ) ) ] - \binom{ n }{ 2 } \E\Bigg[ \sum_{ r = \tau_N( t_0 ) + 1 }^{ \tau_N( t_k ) } c_N( r )^2 \Bigg] \\
&\phantom{\geq 1} - \E\Bigg[ \sum_{ r = \tau_N( t_0 ) + 1 }^{ \tau_N( t_k ) } \mathds{ 1 }_{ \{ c_N( r ) \leq \binom{ n - 2 }{ 2 } D_N( r ) \} } \Bigg]
\end{align*}
where the first line of the right hand side tends to 1 as $N \to \infty$ similarly to \eqref{indicator_lb}: $\E[c_N(\tau_N(t))]$ vanishes by Lemma \ref{stopping_time_lemma}, and $\E[ \sum_{ r = \tau_N( t_0 ) + 1 }^{ \tau_N( t_k ) } c_N(r)^2 ]$ vanishes by \eqref{binary_bound_2}.
It remains to show the last expectation vanishes as $N \to \infty$.

Fix $0 < \varepsilon < 2 / ( 3 n^2 )$, and $N$ large enough that $1 / N < \varepsilon$.
Let $A_i( r ) := \{ \nu_r^{ ( i ) }  \leq N \varepsilon \}$.
Following \cite[Proof of Lemma 5.5]{Moehle03}, 
\begin{align*}
&\frac{ 1 }{ N ( N )_2 } \sum_{ i = 1 }^N ( \nu_r^{ ( i ) } )_2 \Bigg[ \nu_r^{ ( i ) } + \frac{ 1 }{ N } \sum_{ j \neq i } ( \nu_r^{ ( j ) } )^2 \Bigg] \mathds{ 1 }_{ A_i( r ) } \\
&\leq \frac{ 1 }{ N ( N )_2 } \sum_{ i = 1 }^N ( \nu_r^{ ( i ) } )_2 \Bigg[ N \varepsilon + \frac{ 1 }{ N } \sum_{ j = 1 }^N ( \nu_r^{ ( j ) } )_2 + \frac{ 1 }{ N } \sum_{ j = 1 }^N \nu_r^{ ( j ) } \Bigg] \mathds{ 1 }_{ A_i( r ) } \\
&\leq \Bigg[ \varepsilon c_N( r ) + \frac{ 1 }{ N } c_N( r ) + \frac{ N \varepsilon }{ N^2 ( N )_2 } \sum_{ i = 1 }^N \nu_r^{ ( i ) } \sum_{ j = 1 }^N ( \nu_r^{ ( j ) } )_2 \Bigg] \mathds{ 1 }_{ A_i( r ) } \\
&\leq \Bigg( 2 \varepsilon + \frac{ 1 }{ N } \Bigg) c_N( r ),
\end{align*}
and
\begin{equation*}
\frac{ 1 }{ N ( N )_2 } \sum_{ i = 1 }^N ( \nu_r^{ ( i ) } )_2 \Bigg[ \nu_r^{ ( i ) } + \frac{ 1 }{ N } \sum_{ j \neq i } ( \nu_r^{ ( j ) } )^2 \Bigg] \mathds{ 1 }_{ A_i( r )^c } \leq \sum_{ i = 1 }^N \mathds{ 1 }_{ A_i( r )^c }.
\end{equation*}
Thus
\begin{equation*}
\Bigg\{ c_N( r ) \leq \binom{ n - 2 }{ 2 } D_N( r ) \Bigg\} \subseteq \Bigg\{ \Bigg( \binom{ n - 2 }{ 2 }^{ -1 } - 2 \varepsilon - 1 / N \Bigg) c_N( r ) \leq \sum_{ i = 1 }^N \mathds{ 1 }_{ A_i( r )^c } \Bigg\},
\end{equation*}
and
\begin{equation*}
\bigcup_{ r = 1 }^N A_i( r )^c \Rightarrow c_N( r ) =  \frac{ 1 }{ ( N )_2 } \sum_{ i = 1 }^N ( \nu_t^{ ( i ) } )_2 > \frac{ \varepsilon ( N \varepsilon - 1 ) }{ N - 1 } > \varepsilon \Big( \varepsilon - \frac{ 1 }{ N } \Big) > 0.
\end{equation*}
Hence also
\begin{equation*}
\E\Bigg[ \sum_{ r = \tau_N( t_0 ) + 1 }^{ \tau_N( t_k ) } \mathds{ 1 }_{ \{ c_N( r ) \leq \binom{ n - 2 }{ 2 } D_N( r ) \} } \Bigg] \leq \E\Bigg[ \sum_{ r = \tau_N( t_0 ) + 1 }^{ \tau_N( t_k ) } \mathds{ 1 }_{ \big\{ \big( \binom{ n - 2 }{ 2 }^{ -1 } - 2 \varepsilon - 1 / N \big) \varepsilon ( \varepsilon - 1 / N ) \leq \sum_{ i = 1 }^N \mathds{ 1 }_{ A_i( r )^c }\big\} } \Bigg],
\end{equation*}
whereupon Lemma \ref{optional_stopping_lemma} and the conditional Markov inequality yield
\begin{align*}
&\E\Bigg[ \sum_{ r = \tau_N( t_0 ) + 1 }^{ \tau_N( t_k ) } \mathds{ 1 }_{ \{ c_N( r ) \leq \binom{ n - 2 }{ 2 } D_N( r ) \} } \Bigg] \\
&\leq \frac{ 1 }{ \big( \binom{ n - 2 }{ 2 }^{ -1 } - 2 \varepsilon - 1 / N \big) \varepsilon ( \varepsilon - 1 / N ) } \E\Bigg[ \sum_{ r = \tau_N( t_0 ) + 1 }^{ \tau_N( t_k ) } \sum_{ i = 1 }^N \mathds{ 1 }_{ A_i( r )^c } \Bigg].
\end{align*} 
A further two invocations of Lemma \ref{optional_stopping_lemma} with the conditional Markov inequality in between result in
\begin{align*}
&\E\Bigg[ \sum_{ r = \tau_N( t_0 ) + 1 }^{ \tau_N( t_k ) } \mathds{ 1 }_{ \{ c_N( r ) \leq \binom{ n - 2 }{ 2 } D_N( r ) \} } \Bigg] \\
&\leq \frac{ 1 }{ \big( \binom{ n - 2 }{ 2 }^{ -1 } - 2 \varepsilon - 1 / N \big) \varepsilon ( \varepsilon - 1 / N ) } \E\Bigg[ \sum_{ r = \tau_N( t_0 ) + 1 }^{ \tau_N( t_k ) } \sum_{ i = 1 }^N \frac{ ( \nu_r^{ ( i ) } )_3 }{ ( N \varepsilon )_3 } \Bigg] \\
&\leq \frac{ N ( N )_2 }{ \big( \binom{ n - 2 }{ 2 }^{ -1 } - 2 \varepsilon - 1 / N \big) \varepsilon ( \varepsilon - 1 / N ) ( N \varepsilon )_3 } \E\Bigg[ \sum_{ r = \tau_N( t_0 ) + 1 }^{ \tau_N( t_k ) } D_N( r ) \Bigg] \\
&\to \frac{ 1 }{ \big( \binom{ n - 2 }{ 2 }^{ -1 } - 2 \varepsilon\big) \varepsilon^5 } \times 0
\end{align*}
by \eqref{big_merger_bound} as $N \to \infty$, as required.
\end{proof}

\begin{lem}\label{stopping_time_lemma}
Assume \eqref{big_merger_bound} holds.
Then, for each $t > 0$,
\begin{equation*}
\lim_{ N \to \infty } \E[c_N( \tau_N( t ) ) ] = 0.
\end{equation*}
\end{lem}
\begin{proof}
Fix $\varepsilon > 0$ and let $N$ be large enough that $\varepsilon > 2 / N$.
Let $A_i( r ) := \{ \nu_r^{ ( i ) } \leq N \varepsilon \}$.
Following the same strategy as in the proof of Lemma \ref{indicator_lemma} above,
\begin{align*}
\frac{ 1 }{ ( N )_2 } \sum_{ i = 1 }^N ( \nu_r^{ ( i ) } )_2 ( \mathds{ 1 }_{ A_i( r ) } + \mathds{ 1 }_{ A_i( r )^c } ) &\leq \frac{ N \varepsilon }{ ( N )_2 } \sum_{ i = 1 }^N \nu_r^{ ( i ) } + \sum_{ i = 1 }^N \mathds{ 1 }_{ A_i( r )^c } \\
&\leq (1 + O( N^{ -1 } ) ) \varepsilon + \sum_{ i = 1 }^N \mathds{ 1 }_{ A_i( r )^c },
\end{align*}
and by Markov's inequality,
\begin{align*}
\E[ c_N( \tau_N( t ) ) ] &\leq ( 1 + O( N^{ -1 } ) ) \varepsilon + \sum_{ i = 1 }^N \frac{ \E[ ( \nu_{ \tau_N( t ) }^{ ( i ) } )_3 ] }{ ( N \varepsilon )_3 } \\
&\leq ( 1 + O( N^{ -1 } ) ) \varepsilon + \frac{ 1 + O( N^{ -1 } ) }{ \varepsilon^3 } \E\Bigg[ \sum_{ r = 1 }^{ \tau_N( t ) } D_N( r ) \Bigg] \to \varepsilon
\end{align*}
as $N \to \infty$, as required.
\end{proof}

\section*{Acknowledgements}

Jere Koskela was supported by EPSRC grant EP/HO23364/1 as part of the MASDOC DTC at the University of Warwick, by DFG grant BL 1105/3-2, and by EPSRC grant EP/R044732/1.
Paul Jenkins was supported in part by EPSRC grant EP/L018497/1.
Adam Johansen was partially supported by funding from the Lloyd's Register Foundation -- Alan Turing Institute Programme on Data-Centric Engineering.

\bibliographystyle{alpha}\bibliography{bibliography}  

\begin{thebibliography}{DMPR09}

\bibitem[ADH10]{Andrieu10}
C.~Andrieu, A.~Doucet, and R.~Holenstein.
\newblock Particle {Markov} chain {Monte Carlo} methods.
\newblock {\em J. R. Statist. Soc. B}, 72(3):269--342, 2010.

\bibitem[CDG11]{Cerou11}
F.~C\'erou, P.~{Del Moral}, and A.~Guyader.
\newblock A non-asymptotic variance theorem for unnormalized {Feynman}-{Kac}
  particle models.
\newblock {\em Ann. Inst. Henri Poincar\'e Probab. Stat.}, 47:629--649, 2011.

\bibitem[Cho04]{Chopin04}
N.~Chopin.
\newblock Central limit theorem for sequential {Monte Carlo} methods and its
  application to {Bayesian} inference.
\newblock {\em Ann. Stat.}, 32(6):2385--2411, 2004.

\bibitem[CL97]{smc:theory:CL97}
D.~Crisan and T.~Lyons.
\newblock Nonlinear filtering and measure-valued processes.
\newblock {\em Probab. Theory Relat. Fields}, 109:217--244, 1997.

\bibitem[CL13]{smc:theory:CL13}
Hock~Peng Chan and Tze~Leung Lai.
\newblock A general theory of particle filters in hidden markov models and some
  applications.
\newblock {\em Ann. Stat.}, 41(6):2877--2904, 2013.

\bibitem[DCM05]{douc05}
R.~Douc, O.~Capp\'e, and E.~Moulines.
\newblock Comparison of resampling schemes for particle filtering.
\newblock In {\em Proceedings of the 4th International Symposium on Image and
  Signal Processing and Analysis}, pages 64--69, 2005.

\bibitem[DDJ06]{DelMoral06}
P.~{Del Moral}, A.~Doucet, and A.~Jasra.
\newblock Sequential {Monte Carlo} samplers.
\newblock {\em J. R. Statist. Soc. B}, 68:411--436, 2006.

\bibitem[{Del}96]{DelMoral96}
P.~{Del Moral}.
\newblock Nonlinear filtering: interacting particle solution.
\newblock {\em Markov Process Relat.}, 2(4):555--580, 1996.

\bibitem[{Del}04]{DelMoral04}
P.~{Del Moral}.
\newblock {\em Feynman-Kac formulae: genealogical and interacting particle
  systems with applications}.
\newblock Springer, New York, 2004.

\bibitem[{Del}13]{DelMoral13}
P.~{Del Moral}.
\newblock {\em Mean field simulation for {Monte Carlo} integration}.
\newblock Chapman Hall, 2013.

\bibitem[DG01]{DelMoral01}
P.~{Del Moral} and A.~Guionnet.
\newblock On the stability of interacting processes with applications to
  filtering and genetic algorithms.
\newblock {\em Ann. Inst. H. Poincar\'e, Probab. et Stat.}, 37(2):155--194,
  2001.

\bibitem[DG05]{DelMoral05}
P.~{Del Moral} and J.~Garnier.
\newblock Genealogical particle analysis of rare events.
\newblock {\em Ann. Appl. Probab.}, 15:2496--2534, 2005.

\bibitem[DJ11]{Doucet11}
A.~Doucet and A.~M. Johansen.
\newblock A tutorial on particle filtering and smoothing: fifteen years later.
\newblock In D.~Crisan and B.~Rozovsky, editors, {\em The Oxford Handbook of
  Nonlinear Filtering}, pages 656--704. Oxford University Press, 2011.

\bibitem[DM01]{DelMoral01b}
P.~{Del Moral} and L.~Miclo.
\newblock Genealogies and increasing propagation of chaos for {Feynman-Kac} and
  genetic models.
\newblock {\em Ann. Appl. Probab.}, 11(4):1166--1198, 2001.

\bibitem[DM07]{DelMoral07}
P.~{Del Moral} and L.~Miclo.
\newblock Strong propagation of chaos in {Moran}'s type particle
  interpretations of {Feynman-Kac} measures.
\newblock {\em Stoch. Anal. Appl.}, 25(3):519--575, 2007.

\bibitem[DMPR09]{DelMoral09}
P.~{Del Moral}, L.~Miclo, F.~Patras, and S.~Rubenthaler.
\newblock The convergence to equilibrium of neutral genetic models.
\newblock {\em Stoch. Anal. Appl.}, 28(1):123--143, 2009.

\bibitem[GSS93]{GSS93}
N.~J. Gordon, S.~J. Salmond, and A.~F.~M. Smith.
\newblock Novel approach to nonlinear/non-{Gaussian} {Bayesian} state
  estimation.
\newblock {\em {IEE} Proceedings-F}, 140(2):107--113, April 1993.

\bibitem[JMR15]{Jacob15}
P.~E. Jacob, L.~M. Murray, and S.~Rubenthaler.
\newblock Path storage in the particle filter.
\newblock {\em Stat. Comput.}, 25:487--496, 2015.

\bibitem[Kin82]{Kingman82a}
J.~F.~C. Kingman.
\newblock The coalescent.
\newblock {\em Stochast. Process. Applic.}, 13(3):235--248, 1982.

\bibitem[KLW94]{Kong94}
A.~Kong, J.~S. Liu, and W.~H. Wong.
\newblock Sequential imputations and {Bayesian} missing data problems.
\newblock {\em J. Am. Statist. Assoc.}, 89(425):278--288, 1994.

\bibitem[K{\"u}n05]{Kuensch05}
H.~R. K{\"u}nsch.
\newblock Recursive {Monte Carlo} filters: algorithms and theoretical analysis.
\newblock {\em Ann. Stat.}, 33(5):1983--2021, 2005.

\bibitem[LC95]{Liu95}
J.~S. Liu and R.~Chen.
\newblock Blind deconvolution via sequential imputations.
\newblock {\em J. Am. Statist. Assoc.}, 90(430):567--576, 1995.

\bibitem[LW18]{Lee15}
A.~Lee and N.~Whiteley.
\newblock Variance estimation in the particle filter.
\newblock {\em Biometrika}, 105(1):609--625, 2018.

\bibitem[M{\"{o}}h98]{Moehle98}
M.~M{\"{o}}hle.
\newblock Robustness results for the coalescent.
\newblock {\em J. Appl. Probab.}, 35:438--447, 1998.

\bibitem[M{\"{o}}h99]{moehle99}
M.~M{\"{o}}hle.
\newblock Weak convergence to the coalescent in neutral population models.
\newblock {\em J. Appl. Probab.}, 36:446--460, 1999.

\bibitem[Mos62]{Mosimann62}
J.~E. Mosimann.
\newblock On the compound multinomial distribution, the multivariate
  $\beta$-distribution, and correlations among proportions.
\newblock {\em Biometrika}, 49(1):65--82, 1962.

\bibitem[MS03]{Moehle03}
M.~M\"ohle and S.~Sagitov.
\newblock Coalescent patterns in exchangeable diploid population models.
\newblock {\em J. Math. Biol.}, 47:337--352, 2003.

\bibitem[OD19]{smc:theory:OD17}
J.~Olsson and R.~Douc.
\newblock Numerically stable online estimation of variance in particle filters.
\newblock {\em Bernoulli}, 25(2):1504--1535, 2019.

\bibitem[Wak09]{wakeley09}
J.~Wakeley.
\newblock {\em Coalescent theory: an introduction}.
\newblock Roberts \& Co, 2009.

\end{thebibliography}

\end{document}